\documentclass[12pt]{amsart}

\usepackage{amsmath,amssymb,amsthm,amsfonts}
\usepackage[T1]{fontenc}
\usepackage[utf8]{inputenc}
\usepackage{mathrsfs}
\usepackage[margin=1in]{geometry}
\usepackage{amscd}
\usepackage{latexsym}
\usepackage{color}
\usepackage{graphicx}
\usepackage{cite}
\usepackage[colorlinks=true]{hyperref}
\hypersetup{urlcolor=blue,citecolor=blue,linkcolor=blue}

\newtheorem{theorem}{Theorem}[section]
\newtheorem{corollary}[theorem]{Corollary}
\newtheorem{lemma}[theorem]{Lemma}
\newtheorem{proposition}[theorem]{Proposition}

\newtheorem{remark}[theorem]{Remark}

\theoremstyle{remark}
\numberwithin{equation}{section}

\newcommand{\diag}{\operatorname{diag}}
\newcommand{\T}{\operatorname{UT}}
\renewcommand{\t}{\mathfrak{ut}}
\newcommand{\R}{\mathbb{R}}
\newcommand{\w}{\mathbf{w}}
\renewcommand{\b}{\mathbf{b}}
\newcommand{\y}{\mathbf{y}}
\newcommand{\mtx}[1]{\begin{bmatrix}#1\end{bmatrix}}

\title{Weighted Power Means on Upper Triangular Matrix Groups}

\author{Huajun Huang}
\address{Department of Mathematics and Statistics, Auburn University, Auburn, AL 36849, USA}
\email{huanghu@auburn.edu}

\author{Jimmie Lawson}
\address{Department of Mathematics, Louisiana State University, Baton Rouge, LA 70803, USA}
\email{lawson@math.lsu.edu}

\begin{document}

\begin{abstract}
	We study weighted power means on the Lie group $\T_n$ of real upper
	triangular matrices with positive diagonal entries.  For
	$t\in\mathbb R\setminus\{0\}$, the weighted power mean
	$P_t(\mathbf{w};A_1,\ldots,A_m)$ is defined by
	\[
	X=\sum_{i=1}^m w_i(X\#_tA_i),
	\qquad
	X\#_tA:=X(X^{-1}A)^t.
	\]
	Using divided differences, analytic matrix functions, and the Parlett
	recurrence, we prove existence, uniqueness, and real-analytic dependence
	of $P_t$ on the data and the weights.  We derive finite entrywise and
	block-recursive algorithms, establish structural properties of the mean,
	and obtain explicit formulas for the cases  $\T_2$ and $\T_3$.	
	We prove that $P_t$ converges to the weighted Karcher mean as $t\to0$.
	We also establish a Lie--Trotter formula and derive second-order
	asymptotic expansions for the power, Karcher, and log-Euclidean means,
	showing that the parameter $t$ first enters the local behavior at second
	order.  \end{abstract}

\subjclass[2020 MSC]{Primary 47A64; Secondary 15A16, 15A24, 22E25}
\keywords{power mean, Karcher mean, upper triangular matrix,
	divided difference, Parlett recurrence, Lie--Trotter formula}

\maketitle

\section{Introduction}
Statistics on manifolds (also called geometric statistics or non-Euclidean statistics) has been and continues to be a rich and growing area of research
and application in recent years.  Some manifolds that have arisen in applications include the cone of positive definite matrices, Lie groups
and homogeneous spaces, and hyperbolic space.  Various notions of a mean have appeared in these studies, a popular one being the
Karcher mean.  See \cite{Pennec} and \cite{PennecArsigny} for foundational ideas and examples.

In this paper we introduce another mean, the power mean, in the setting of Lie groups.
Power means constitute one of the basic one-parameter families of scalar
means.  If $a_1,\ldots,a_m>0$ and
$\w=(w_1,\ldots,w_m)$ is a probability vector, then
\[
M_t(\w;a_1,\ldots,a_m)
 =\left(\sum_{i=1}^m w_i a_i^t\right)^{1/t},\qquad t\ne0,
\]
and $M_t$ converges to the weighted geometric mean as $t\to0$.
Lim and P\'alfia \cite{LimPalfia2012} introduced a noncommutative analogue
on the cone of positive definite matrices.  For $0<t\le1$, their power mean
is the unique positive definite solution of
\[
X=\sum_{i=1}^m w_i(X\#_tA_i),
\]
where $\#_t$ is the weighted positive definite geometric mean; negative
powers are obtained by inversion duality.  Among other properties, they
proved the important result that these means converge to the 
weighted Karcher mean as $t\to0$.

The aim of this paper is to develop an analogous  power-mean theory on the solvable Lie
group
\[
\T_n=\{A\in M_n(\R): A\ \text{is upper triangular and}\ A_{ii}>0 \text{ for all }i\},
\]
whose Lie algebra $\t_n$ consists of all real upper triangular matrices.
The exponential map $\exp:\t_n\to\T_n$ is a real-analytic diffeomorphism,
with inverse $\log$.  In particular, every element of $\T_n$ has positive
spectrum, so real powers and logarithms are defined globally by analytic
functional calculus, as given by the usual matrix exponential and 
logarithm maps.

For $A,B\in\T_n$ and $s\in\R$, define the weighted geometric mean
\begin{equation}\label{eq:sharp}
 A\#_sB:=A^{1/2}\bigl(A^{-1/2}BA^{-1/2}\bigr)^sA^{1/2}.
\end{equation}
Equivalently,
\[
A\#_sB
 =A(A^{-1}B)^s.
\]
This interpolation satisfies
\[
A\#_0B=A,\qquad A\#_1B=B,\qquad
A\#_s(A\#_rB)=A\#_{sr}B,
\]
and, for $P,Q\in\T_n$,
\begin{equation}\label{eq:sharp-covariance}
 (PAQ)\#_s(PBQ)=P(A\#_sB)Q.
\end{equation}
   When $0\leq s\leq 1$, the weighted geometric mean 
\[
A\#_s B=G((1-s,s);A,B),
\]
is a special case of the general weighted Karcher mean $X=G(\w ; A_1,\ldots, A_m)$ defined as the unique solution \(X\in\T_n\) of the Karcher equation:
\begin{equation}\label{Karcher equation}
	\sum_{i=1}^m w_i \log(X^{-1}A_i)=0.
\end{equation}
In \cite{HuangLawson26}, the authors presented  a recursive  analytic formula  for the weighted Karcher mean on $\T_n$
and explored its properties including existence and uniqueness. 

Now for $t\in \R\setminus\{0\}$ we consider the fixed-point equation
\begin{equation}\label{eq:power-mean}
 X=\sum_{i=1}^m w_i(X\#_tA_i),
\end{equation}
which, after left multiplication by $X^{-1}$, is equivalent to
\begin{equation}\label{eq:normalized-power}
 \sum_{i=1}^m w_i(X^{-1}A_i)^t=I_n.
\end{equation}
The unique solution, whose existence is proved below, is called the \emph{weighted power mean}  and is denoted by
$P_t(\w;A_1,\ldots,A_m)$. Moreover, when $t\to 0$, 
the rescaled power equation
\[
\sum_{i=1}^m w_i\frac{(X^{-1}A_i)^t-I_n}{t}=0
\]
converges formally to the Karcher equation
\eqref{Karcher equation}.
We subsequently prove that the weighted  power means converge entrywise to the 
weighted Karcher mean as $t$ approaches $0$.

The formal resemblance with the positive definite theory conceals a
substantial geometric difference.  On $\T_n$, the Karcher and power mean
equations do not arise from the same Riemannian and order structures that
govern the positive definite cone.  In particular, $\T_n$ is neither a
self-adjoint cone nor an ordered symmetric space, so the L\"owner order and
the Thompson-metric contraction arguments available in the positive
definite setting have no direct analogue here.  Our approach instead
exploits the triangular structure itself.  Analytic functions of upper
triangular matrices admit explicit divided-difference representations, and
the Parlett recurrence shows that, once the lower superdiagonals have been
determined, the next unknown entry enters
\eqref{eq:normalized-power} linearly.   This observation leads to a finite recursive construction and is
the key mechanism behind existence, uniqueness, and analyticity of the
power mean on $\T_n$.

Our first main result establishes that, for every $t\ne0$, every strictly positive probability vector
$\w\in\Delta_m^\circ$, and every $A_1,\ldots,A_m\in\T_n$, the power
equation \eqref{eq:power-mean} has a unique solution in $\T_n$, depending
real-analytically on the matrices and the weights.  The triangular
structure yields finite entrywise and block-recursive algorithms, including
explicit formulas on $\T_2$ and $\T_3$.  We also establish several
structural properties, including commuting consistency, homogeneity,
permutation invariance, two-sided covariance, principal-block consistency,
and exact diagonal and determinant formulas.  
We further establish a Lie--Trotter formula showing that every
nonzero-order power mean has the same first-order linearization at the
identity, namely the weighted arithmetic mean on $\t_n$.  In particular,
\[
\lim_{s\to0}
P_t\bigl(\w;A_1^s,\ldots,A_m^s\bigr)^{1/s}
=
\exp\left(\sum_{i=1}^m w_i\log A_i\right).
\]
A second-order expansion shows that the parameter $t$ first enters at
quadratic order and yields a local comparison among the power, Karcher,
and log-Euclidean means.  

The paper is organized as follows.  Section~2 develops preliminary
properties of the power equation, including negative orders, structural
properties, and diagonal-order consequences.  
Section~3 recalls the divided-difference and Parlett formulas for analytic
functions of upper triangular matrices.  Section~4 proves existence,
uniqueness, and real-analytic dependence, while Section~5 develops
entrywise and block-recursive algorithms and explicit low-dimensional
formulas.  Section~6 proves convergence to the weighted Karcher mean as
$t\to0$.  Section~7 establishes the Lie--Trotter formula, second-order
asymptotics, and local comparisons with the Karcher and log-Euclidean
means.   Section~8 gives concluding remarks.

\section{Preliminaries and basic properties}

Throughout,
\[
\Delta_m^\circ
 :=\left\{(w_1,\ldots,w_m)\in(0,1)^m:
 \sum_{i=1}^m w_i=1\right\},
\qquad \w=(w_1,\ldots,w_m)\in\Delta_m^\circ.
\]
For $A=[a_{ij}]\in\t_n$, let $\diag(A)$ denote its diagonal part.  For
$1\le p\le q\le n$, $A[p:q]$ denotes the \emph{principal block} indexed by
$p,p+1,\ldots,q$, the block submatrix having upper left entry $A_{pp}$ and lower right entry $A_{qq}$.  Let $E_{ij}$ denote the 
standard matrix unit with $ij$-entry $1$ and all other entries $0$.

\subsection{Principal blocks and analytic functional calculus}
Principal diagonal blocks
 are compatible with the operations used below.
For $A,B\in\T_n$ and $1\le p\le q\le n$,
\begin{equation}\label{eq:block-product}
 (AB)[p:q]=A[p:q]B[p:q],
 \qquad
 A^{-1}[p:q]=(A[p:q])^{-1},
\end{equation}
where $A[p:q]$ denotes the block submatrix with upper left entry $A_{pp}$
and lower right entry $A_{qq}$.

The spectrum $\sigma(A)$ of  $A$ consists of its diagonal entries. 
\begin{lemma}\label{lem:block-functional-calculus}
Let $A\in\T_n$ and let $f$ be analytic on a neighborhood of $\sigma(A)$.
Then
\begin{equation}\label{eq:block-function}
 f(A)[p:q]=f(A[p:q]),\qquad 1\le p\le q\le n.
\end{equation}
In particular, restriction to a principal block commutes with real powers
and with the logarithm.
\end{lemma}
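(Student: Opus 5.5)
The plan is to reduce the claim to polynomials, where it follows from the multiplicativity of block restriction \eqref{eq:block-product}, and then pass to general analytic $f$ through the holomorphic functional calculus.

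\textbf{Polynomials.} From \eqref{eq:block-product} and induction on $k$ we get $(A^k)[p:q]=(A[p:q])^k$ for all $k\ge0$. The case $k=0$ holds because $I_n[p:q]=I_{q-p+1}$. Restriction $X\mapsto X[p:q]$ is linear, so $g(A)[p:q]=g(A[p:q])$ for every polynomial $g$.

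\textbf{General analytic $f$.} Here $f$ is analytic on an open set $U\supseteq\sigma(A)$. Choose a contour $\Gamma\subset U$ surrounding the finite set $\sigma(A)=\{a_{11},\ldots,a_{nn}\}$, and define
\[
f(A)=\frac{1}{2\pi i}\oint_\Gamma f(z)(zI_n-A)^{-1}\,dz .
\]
For $z\notin\sigma(A)$, the matrix $zI_n-A$ is upper triangular and invertible. Restriction to principal blocks commutes with inversion of invertible upper triangular matrices; the argument behind \eqref{eq:block-product} applies verbatim to complex entries. Hence
\[
(zI_n-A)^{-1}[p:q]=(zI_{q-p+1}-A[p:q])^{-1}.
\]
Moreover $\sigma(A[p:q])=\{a_{pp},\ldots,a_{qq}\}\subseteq\sigma(A)$, so $\Gamma$ also encloses $\sigma(A[p:q])$. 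Restriction is linear and continuous, so it passes through the integral, and this gives $f(A)[p:q]=f(A[p:q])$.

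An alternative route avoids contour integrals. By Hermite interpolation, $f(A)=g(A)$ for a polynomial $g$ that matches $f$ and its derivatives at the eigenvalues up to multiplicity. Such a $g$ also matches $f$ on $\sigma(A[p:q])$ with the required multiplicities, since the multiplicities in $A[p:q]$ and the sizes of its Jordan blocks are no larger. So $f(A[p:q])=g(A[p:q])$, and the polynomial case finishes the proof. The contour version is cleaner, and I will use it.

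\textbf{Main obstacle.} The only delicate point is that block restriction respects inverses for complex upper triangular matrices and not just for matrices in $\T_n$. The justification: if $M$ is upper triangular and invertible, then $M^{-1}$ is upper triangular. Then $(MM^{-1})[p:q]=M[p:q]\,M^{-1}[p:q]=I$, because for upper triangular matrices the $(i,j)$ entry of a product with $p\le i\le j\le q$ involves only indices $k$ with $i\le k\le j$.

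\textbf{Consequences.} The statement about real powers and the logarithm follows by taking $f(z)=z^s=e^{s\log z}$ and $f(z)=\log z$ on the open right half-plane, which contains the positive spectrum. These coincide with the usual matrix power and logarithm on $\T_n$.
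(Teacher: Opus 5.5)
Your proof is correct and follows the same route as the paper: block restriction is multiplicative, which gives the polynomial case, and the functional calculus then extends it to analytic $f$. You make the final step explicit through the Cauchy integral and the identity $(zI_n-A)^{-1}[p:q]=(zI_{q-p+1}-A[p:q])^{-1}$, whereas the paper leaves it as a one-line appeal to analytic functional calculus.
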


\begin{proof}
Restriction to a principal diagonal block is an algebra homomorphism on upper triangular
	matrices:
	\[
	  (AB)[p:q]=A[p:q]B[p:q],
	\] and similarly addition and scalar multiplication are preserved.
	Consequently,
	\[
	P(A)[p:q]=P(A[p:q])
	\]
	for every polynomial $P$, and the result for analytic $f$ follows by
	analytic functional calculus.  
	\end{proof}

\subsection{Diagonal and block equations}

The following observation is the starting point of all subsequent
recursions involving the power mean.

\begin{lemma}\label{lem:diagonal-block}
Suppose $t\in\mathbb{R}\setminus \{0\}$ and $X\in\T_n$ satisfies \eqref{eq:normalized-power}.  Then
\begin{equation}\label{eq:diagonal}
 X_{jj}=\left(\sum_{i=1}^m w_i(A_i)_{jj}^t\right)^{1/t},
 \qquad j=1,\ldots,n,
\end{equation}
or equivalently
\[
 \diag(X)^t=\sum_{i=1}^m w_i\diag(A_i)^t.
\]
Moreover, for $1\le p\le q\le n$,
\begin{equation}\label{eq:block-equation}
 \sum_{i=1}^m w_i
 \bigl(X[p:q]^{-1}A_i[p:q]\bigr)^t=I_{q-p+1}.
\end{equation}
Consequently, after uniqueness has been established,
\begin{equation}\label{eq:block-consistency}
 P_t(\w;A_1,\ldots,A_m)[p:q]
 =P_t\bigl(\w;A_1[p:q],\ldots,A_m[p:q]\bigr).
\end{equation}
\end{lemma}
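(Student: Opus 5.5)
The plan is to apply Lemma~\ref{lem:block-functional-calculus} to the normalized equation \eqref{eq:normalized-power} and then read off principal blocks. Fix $1\le p\le q\le n$ and restrict \eqref{eq:normalized-power} to the block $[p:q]$. Restriction to a principal block is linear and preserves the identity, so the right side becomes $I_{q-p+1}$ and the left side becomes $\sum_i w_i\bigl((X^{-1}A_i)^t\bigr)[p:q]$. Each $X^{-1}A_i$ lies in $\T_n$, so its spectrum is positive, and $z\mapsto z^t=\exp(t\log z)$ is analytic on a neighborhood of it. Lemma~\ref{lem:block-functional-calculus} then gives
\[
\bigl((X^{-1}A_i)^t\bigr)[p:q]=\bigl((X^{-1}A_i)[p:q]\bigr)^t .
\]
By \eqref{eq:block-product}, $(X^{-1}A_i)[p:q]=X[p:q]^{-1}A_i[p:q]$. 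Substituting this into the restricted equation yields \eqref{eq:block-equation}.

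The diagonal formula \eqref{eq:diagonal} is the special case $p=q=j$. In that case every block is the $1\times1$ matrix of a positive scalar, so \eqref{eq:block-equation} reads
\[
\sum_i w_i\bigl(X_{jj}^{-1}(A_i)_{jj}\bigr)^t=1,
\qquad\text{that is,}\qquad
X_{jj}^t=\sum_i w_i(A_i)_{jj}^t .
\]
The right-hand side is positive, and $X_{jj}>0$ because $X\in\T_n$. Taking the positive $1/t$-th root (valid for $t\neq0$ of either sign) gives \eqref{eq:diagonal}. The matrix form $\diag(X)^t=\sum_i w_i\diag(A_i)^t$ is the same statement written for the diagonal matrices.

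For \eqref{eq:block-consistency}, let $X=P_t(\w;A_1,\ldots,A_m)$. By \eqref{eq:block-equation}, $X[p:q]$ solves the normalized power equation for the data $A_1[p:q],\ldots,A_m[p:q]$. These blocks lie in $\T_{q-p+1}$, and $X[p:q]\in\T_{q-p+1}$ as well, since a principal block of a matrix in $\T_n$ is upper triangular with positive diagonal. Uniqueness of the solution in $\T_{q-p+1}$, once it is proved, then identifies $X[p:q]$ with $P_t(\w;A_1[p:q],\ldots,A_m[p:q])$.

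There is no real obstacle in this argument. The only point that needs care is checking that $z^t$ is analytic near the relevant spectrum, so that Lemma~\ref{lem:block-functional-calculus} applies. This holds because every element of $\T_n$ has positive diagonal, hence positive spectrum.
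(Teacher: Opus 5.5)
Your proof is correct and follows essentially the same route as the paper: restrict the normalized equation to the principal block $[p:q]$ using \eqref{eq:block-product} and Lemma~\ref{lem:block-functional-calculus}. The diagonal formula then comes from the $(j,j)$-entries, and your $p=q=j$ specialization is the same computation the paper does directly. You also spell out the uniqueness argument behind \eqref{eq:block-consistency}, which the paper leaves implicit.
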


\begin{proof}
The diagonal entries of $X^{-1}A_i$ are $(A_i)_{jj}/X_{jj}$.  Taking the
$(j,j)$-entry of \eqref{eq:normalized-power} gives
\[
 \sum_{i=1}^m w_i
 \left(\frac{(A_i)_{jj}}{X_{jj}}\right)^t=1,
\]
which is equivalent to \eqref{eq:diagonal}.  Restricting
\eqref{eq:normalized-power} to $[p:q]$ and using
\eqref{eq:block-product}--\eqref{eq:block-function} yields
\eqref{eq:block-equation}.
\end{proof}

The diagonal variables are completely decoupled: each is the scalar weighted
power mean of the corresponding diagonal entry.  All genuinely
noncommutative information is therefore carried by the strictly upper
triangular part.

\subsection{Negative orders}

In the positive definite theory developed by Lim--P\'alfia \cite{LimPalfia2012}, the weighted power mean for negative exponent is defined by the inversion-duality convention: 
 for $t>0$,
\begin{equation}\label{eq:negative-intro}
	P_{-t}(\w;A_1,\ldots,A_m)
	:=
	P_t(\w;A_1^{-1},\ldots,A_m^{-1})^{-1}.
\end{equation}
Using the existence and uniqueness of the solution of  equation (\ref{eq:power-mean}), which will be established in Theorem \ref{thm:existence},  we show that this definition and 
\eqref{eq:normalized-power} coincide in $\T_n$.

\begin{proposition} 
	\label{prop:negative-power-mean}
	Let $t>0$ and let $A_1,\ldots,A_m\in\T_n$. Then the matrix
	\begin{equation}\label{eq:negative-power-definition}
		X 
		:=
		P_t(\w;A_1^{-1},\ldots,A_m^{-1})^{-1}.
	\end{equation}
	 is precisely the unique matrix
	$X\in\T_n$ satisfying
	\begin{equation}\label{eq:negative-power-equation}
		\sum_{i=1}^m w_i(X^{-1}A_i)^{-t}=I_n.
	\end{equation}
\end{proposition}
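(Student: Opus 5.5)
The plan is to show directly that $X:=Y^{-1}$ satisfies \eqref{eq:negative-power-equation}, where $Y:=P_t(\w;A_1^{-1},\ldots,A_m^{-1})$. We then get uniqueness by running the same correspondence in reverse. We take from Theorem~\ref{thm:existence} that, for every $t\neq0$, equation \eqref{eq:normalized-power} has a unique solution in $\T_n$. In particular this holds for the data $A_i^{-1}$ and exponent $t$.

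By definition $Y$ satisfies $\sum_i w_i(Y^{-1}A_i^{-1})^t=I_n$. The key identity is a similarity relation between $Y^{-1}A_i^{-1}$ and $(X^{-1}A_i)^{-1}$. With $X=Y^{-1}$ we have
\[
Y^{-1}A_i^{-1}=X A_i^{-1}=X\,(A_i^{-1}X)\,X^{-1},
\qquad
(X^{-1}A_i)^{-1}=A_i^{-1}X .
\]
So $Y^{-1}A_i^{-1}$ is the conjugate by $X$ of $(X^{-1}A_i)^{-1}$. Real powers on $\T_n$ are given by analytic functional calculus, $B^t=\exp(t\log B)$, and this commutes with conjugation: $(XBX^{-1})^t=XB^tX^{-1}$. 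Hence
\[
(Y^{-1}A_i^{-1})^t = X\,(X^{-1}A_i)^{-t}\,X^{-1}.
\]
Summing with weights $w_i$ and conjugating back by $X^{-1}$ turns $\sum_i w_i(Y^{-1}A_i^{-1})^t=I_n$ into $\sum_i w_i(X^{-1}A_i)^{-t}=I_n$. This is exactly \eqref{eq:negative-power-equation}. We also use $(B^{-1})^t=B^{-t}$, which holds since $\log(B^{-1})=-\log B$ on $\T_n$.

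For uniqueness, suppose $Z\in\T_n$ satisfies \eqref{eq:negative-power-equation}. The same computation read backwards, with $Y'=Z^{-1}$, shows $\sum_i w_i(Y'^{-1}A_i^{-1})^t=I_n$. Uniqueness in Theorem~\ref{thm:existence} then gives $Y'=Y$, so $Z=X$. Alternatively, uniqueness of \eqref{eq:negative-power-equation} follows directly from Theorem~\ref{thm:existence} applied with exponent $-t\ne0$. The argument above then identifies that unique solution with $X$.

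The only step needing care is justifying $(XBX^{-1})^t=XB^tX^{-1}$ and $(B^{-1})^t=B^{-t}$ within $\T_n$. We would cite the global analytic $\log$ and $\exp$ on $\T_n$. Both identities follow from $\exp(XCX^{-1})=X\exp(C)X^{-1}$ and from uniqueness of the logarithm in $\t_n$: $X(\log B)X^{-1}$ lies in $\t_n$ and exponentiates to $XBX^{-1}$. Everything else is formal.
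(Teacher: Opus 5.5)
Your proposal is correct and takes essentially the paper's route. Like the paper, you identify $(X^{-1}A_i)^{-1}=A_i^{-1}X$ as the $X$-conjugate of $Y^{-1}A_i^{-1}=XA_i^{-1}$, apply similarity equivariance of the functional calculus, and get uniqueness by reversing the correspondence and invoking uniqueness of the positive-order mean. Your two additions are also valid: justifying the equivariance through the global $\log$ on $\T_n$, and applying Theorem~\ref{thm:existence} directly with exponent $-t$.
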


\begin{proof}
	Set
	\[
	Y=P_t(\w;A_1^{-1},\ldots,A_m^{-1}),
	\qquad
	X=Y^{-1}.
	\]
	Since $Y$ satisfies the positive-order normalized power equation,
	\[
	\sum_{i=1}^m w_i(Y^{-1}A_i^{-1})^t=I_n,
	\]
	we have
	\[
	\sum_{i=1}^m w_i(XA_i^{-1})^t=I_n.
	\]
	Now
	\[
	(X^{-1}A_i)^{-1}
	=
	A_i^{-1}X
	=
	X^{-1}(XA_i^{-1})X.
	\]
	By similarity equivariance of analytic functional calculus,
	\[
	(X^{-1}A_i)^{-t}
	=
	X^{-1}(XA_i^{-1})^tX.
	\]
	Hence
	\[
	\sum_{i=1}^m w_i(X^{-1}A_i)^{-t}
	=
	X^{-1}
	\left(
	\sum_{i=1}^m w_i(XA_i^{-1})^t
	\right)X
	=
	I_n.
	\]
	Thus the matrix in \eqref{eq:negative-power-definition} satisfies
	\eqref{eq:negative-power-equation}.
	
	Conversely, suppose that $X\in\T_n$ satisfies
	\eqref{eq:negative-power-equation}.  Using the same similarity identity,
	\[
	\sum_{i=1}^m w_i(XA_i^{-1})^t=I_n.
	\]
	Therefore $Y=X^{-1}$ satisfies
	\[
	\sum_{i=1}^m w_i(Y^{-1}A_i^{-1})^t=I_n.
	\]
	By uniqueness of the positive-order power mean,
	\[
	Y=P_t(\w;A_1^{-1},\ldots,A_m^{-1}),
	\]
	and hence
	\[
	X=P_t(\w;A_1^{-1},\ldots,A_m^{-1})^{-1}
	=
	P_{-t}(\w;A_1,\ldots,A_m).
	\]

\end{proof}

\subsection{Structural properties}

For brevity we write $\mathbf A=(A_1,\ldots,A_m)$.  The next proposition lists
the algebraic properties of the power mean that will be used later.  The   assertions follow
from the defining equation together with the uniqueness  proved in
Theorem~\ref{thm:existence}.

\begin{proposition}\label{prop:basic-properties}
Let $\mathbf A\in(\T_n)^m$, $\w\in\Delta_m^\circ$, and
$t\in\R \setminus\{0\}$.  Then the following hold.
\begin{enumerate}
\item[(i)] \emph{Commuting consistency.} If $A_1,\ldots,A_m$ commute
pairwise, then
\begin{equation}\label{eq:commuting}
 P_t(\w;\mathbf A)
 =\left(\sum_{i=1}^m w_iA_i^t\right)^{1/t}.
\end{equation}

\item[(ii)] \emph{Joint homogeneity.} For $a_1,\ldots,a_m>0$, set
\[
 c_t=\left(\sum_{i=1}^m w_i a_i^t\right)^{1/t},
 \qquad
 v_i=\frac{w_i a_i^t}{\sum_{j=1}^m w_j a_j^t}.
\]
Then
\begin{equation}\label{eq:joint-homogeneity}
 P_t(\w;a_1A_1,\ldots,a_mA_m)
 =c_tP_t(\mathbf v;A_1,\ldots,A_m).
\end{equation}
In particular, $P_t(\w;aA_1,\ldots,aA_m)=aP_t(\w;\mathbf A)$ for $a>0$.

\item[(iii)] \emph{Permutation invariance.} For every permutation $\sigma$,
\begin{equation}\label{eq:permutation}
 P_t(\w_\sigma;A_{\sigma(1)},\ldots,A_{\sigma(m)})
 =P_t(\w;\mathbf A).
\end{equation}

\item[(iv)] \emph{Two-sided covariance.} For $P,Q\in\T_n$,
\begin{equation}\label{eq:two-sided-covariance}
 P_t(\w;PA_1Q,\ldots,PA_mQ)
 =P\,P_t(\w;\mathbf A)\,Q.
\end{equation}
In particular, $P_t$ is similarity equivariant under $\T_n$.

\item[(v)] \emph{Idempotency.} $P_t(\w;A,\ldots,A)=A$.

\item[(vi)] \emph{Principal-block consistency.} Equation
\eqref{eq:block-consistency} holds.

\item[(vii)] \emph{Diagonal formula.} If
$P_t(\w;\mathbf A)=[x_{pq}]$, then
\begin{equation}\label{eq:diagonal-property}
 x_{jj}=\left(\sum_{i=1}^m w_i(A_i)_{jj}^t\right)^{1/t}.
\end{equation}

\item[(viii)] \emph{Repetition invariance.} Repeating each $A_i$ $k$ times
and replacing the corresponding weights by $w_i/k$ leaves the mean
unchanged.

\item[(ix)] \emph{Reduction.} If
$\widehat\w=(1-w_m)^{-1}(w_1,\ldots,w_{m-1})$, then
\begin{equation}\label{eq:reduction}
 P_t(\w;A_1,\ldots,A_{m-1},X)=X
 \quad\Longleftrightarrow\quad
 P_t(\widehat\w;A_1,\ldots,A_{m-1})=X.
\end{equation}

\item[(x)] \emph{Scaling along $\#$.} For $t\ne 0$ and $s>0$,
\begin{equation}\label{eq:geodesic-scaling}
 P_t(\w;X\#_sA_1,\ldots,X\#_sA_m)=X
 \quad\Longleftrightarrow\quad
 P_{st}(\w;\mathbf A)=X.
\end{equation}

\end{enumerate}
\end{proposition}

\begin{proof}
  If the $A_i$ commute and
$S=\sum_iw_iA_i^t$, then $X=S^{1/t}$ commutes with every $A_i$ and
\[
 \sum_iw_i(X^{-1}A_i)^t=X^{-t}\sum_iw_iA_i^t=I_n,
\]
which proves (i) by uniqueness.

For (ii), let $Y=P_t(\mathbf v;A_1,\ldots,A_m)$ and $X=c_tY$.  Since
\[
 X\#_t(a_iA_i)=c_t^{1-t}a_i^t(Y\#_tA_i),
\]
we obtain
\[
 \sum_iw_iX\#_t(a_iA_i)
 =c_t\sum_iv_i(Y\#_tA_i)=c_tY=X.
\]
Assertion (iii) is immediate from the weighted sum.  For (iv), analytic
functional calculus is similarity equivariant, and therefore
\[
 (PXQ)\#_t(PA_iQ)
 =PXQ\bigl(Q^{-1}X^{-1}A_iQ\bigr)^t
 =P(X\#_tA_i)Q.
\]
The transformed fixed-point equation proves the covariance by uniqueness.
Assertions (v)--(vii) follow from the defining equation and
Lemma~\ref{lem:diagonal-block}.  Repetition invariance is immediate.
For (ix), use $X\#_tX=X$ and move the term $w_mX$ to the left.  Finally,
\[
 X\#_t(X\#_sA_i)=X(X^{-1}A_i)^{st}=X\#_{st}A_i,
\]
which proves (x).  
\end{proof}

A direct computation shows that 
\begin{equation}\label{eq:endpoints}
	P_1(\w;\mathbf A)=\sum_{i=1}^m w_iA_i,
	\qquad
	P_{-1}(\w;\mathbf A)=\left(\sum_{i=1}^m w_iA_i^{-1}\right)^{-1}.
\end{equation}
We will prove in Theorem~\ref{thm:limit} that 
\begin{equation}
	\lim_{t\to0}P_t(\w;\mathbf A)
	=G(\w;\mathbf A). 
\end{equation}
Thus the notation of power mean can be extended  continuously to order
zero. 

\subsection{Diagonal order}

Define the diagonal order on $\T_n$: 
\[
A\preceq_{\rm d}B
\quad\Longleftrightarrow\quad
A_{jj}\le B_{jj}\quad(1\le j\le n).
\]

\begin{corollary}\label{cor:power-mean-diagonal}
Let $t, s\in\mathbb R\setminus\{0\}$ and $t<s$. 
Then 
\[
P_t(\w;A_1,\ldots,A_m)\preceq_{\rm d}
P_s(\w;A_1,\ldots,A_m). 
\]
	Moreover,
\[
\min_i(A_i)_{jj}\le
\bigl(P_t(\w;\mathbf A)\bigr)_{jj}
\le\max_i(A_i)_{jj},\qquad j=1,\ldots,n.
\]
\end{corollary}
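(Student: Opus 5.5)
The plan is to reduce everything to scalar power means via the diagonal formula. By Proposition~\ref{prop:basic-properties}(vii), or equivalently Lemma~\ref{lem:diagonal-block}, each diagonal entry of the mean is a scalar weighted power mean:
\[
\bigl(P_t(\w;\mathbf A)\bigr)_{jj}=M_t\bigl(\w;(A_1)_{jj},\ldots,(A_m)_{jj}\bigr).
\]
The diagonal order only compares diagonal entries. So both assertions reduce, for each fixed $j$, to two classical facts about the scalar means $M_t$ with positive entries $a_i=(A_i)_{jj}$ and weights $w_i\in(0,1)$: the map $t\mapsto M_t(\w;a)$ is nondecreasing on $\R\setminus\{0\}$, and $\min_i a_i\le M_t\le\max_i a_i$.

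For the bounds, fix $t\ne0$ and put $a_-=\min_i a_i$ and $a_+=\max_i a_i$.
\begin{itemize}
\item If $t>0$, then $a_-^t\le\sum_iw_ia_i^t\le a_+^t$. Taking the increasing map $u\mapsto u^{1/t}$ gives the claim.
\item If $t<0$, the inequalities for $a_i^t$ reverse, because $u\mapsto u^t$ is decreasing. The map $u\mapsto u^{1/t}$ is also decreasing, so it reverses them back, and the same bounds hold.
\end{itemize}

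For monotonicity in $t$, I would use Jensen's inequality, handling three cases.
\begin{itemize}
\item $0<t<s$: apply Jensen to the convex function $\phi(u)=u^{s/t}$ on $(0,\infty)$ at the points $u_i=a_i^t$. This gives $\bigl(\sum w_ia_i^t\bigr)^{s/t}\le\sum w_ia_i^s$, and raising to the power $1/s>0$ yields $M_t\le M_s$.
\item $t<s<0$: apply the previous case to $0<-s<-t$ with the reciprocals $1/a_i$. This gives $M_{-s}(1/a)\le M_{-t}(1/a)$. Since $M_{-r}(\w;1/a)=1/M_r(\w;a)$, inverting yields $M_t(a)\le M_s(a)$.
\item $t<0<s$: compare both with the weighted geometric mean $g=\prod a_i^{w_i}$. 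Weighted AM--GM applied to $a_i^s$ gives $g^s\le\sum w_ia_i^s$, so $g\le M_s$. Applied to $a_i^{t}$ it gives $g^t\le\sum w_ia_i^t$, and raising to the negative power $1/t$ reverses this to $M_t\le g$.
\end{itemize}

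There is no real obstacle, since the statement is essentially scalar once the diagonal formula is available. The only point requiring care is the bookkeeping of inequality directions when exponents are negative, especially in the mixed-sign case, which is handled by the comparison with the geometric mean.
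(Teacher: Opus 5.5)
Your proposal is correct and follows the paper's proof: both reduce, via the diagonal formula \eqref{eq:diagonal-property}, to the scalar weighted power means of the diagonal entries. The paper simply cites the classical scalar power-mean inequality, whereas you prove it, including the min/max bounds and the sign cases $0<t<s$, $t<s<0$ and $t<0<s$, and that proof is correct.
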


\begin{proof}
	By \eqref{eq:diagonal-property}, for each $j=1,\ldots,n$,
	\[
	\left(P_t(\w;\mathbf A)\right)_{jj}
	=
	\left(\sum_{i=1}^m w_i(A_i)_{jj}^t\right)^{1/t}.
	\]
The classical power-mean inequality for scalars yields
	\[\min_i(A_i)_{jj}\le 
	\left(\sum_{i=1}^m w_i(A_i)_{jj}^t\right)^{1/t}
	\leq
	\left(\sum_{i=1}^m w_i(A_i)_{jj}^s\right)^{1/s}
	\le\max_i(A_i)_{jj},
	\]
	for every $j$, which proves the claim.
\end{proof}

\begin{corollary}\label{cor:diagonal-properties}
	Let $t\in\mathbb R\setminus\{0\}$.  If $A_i\preceq_{\rm d}B_i$ for every $i$,
	then
	\[
	P_t(\w;A_1,\ldots,A_m)\preceq_{\rm d}
	P_t(\w;B_1,\ldots,B_m).
	\]
\end{corollary}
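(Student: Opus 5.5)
The plan is to reduce everything to the diagonal formula \eqref{eq:diagonal-property} from Proposition~\ref{prop:basic-properties}(vii). That formula shows the $(j,j)$-entry of $P_t(\w;\mathbf A)$ depends only on the scalars $(A_1)_{jj},\ldots,(A_m)_{jj}$. Explicitly,
\[
\bigl(P_t(\w;\mathbf A)\bigr)_{jj}=M_t\bigl(\w;(A_1)_{jj},\ldots,(A_m)_{jj}\bigr),
\]
where $M_t$ is the scalar weighted power mean from the introduction. So the claim is equivalent to the statement that, for each fixed $j$, the scalar map $(a_1,\ldots,a_m)\mapsto M_t(\w;a_1,\ldots,a_m)$ is nondecreasing in each argument on $(0,\infty)^m$. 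We then apply this with $a_i=(A_i)_{jj}\le b_i=(B_i)_{jj}$. The hypothesis $A_i\preceq_{\rm d}B_i$ supplies exactly these entrywise inequalities, and positivity holds because all matrices lie in $\T_n$.

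For the scalar monotonicity, I split by the sign of $t$.

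\textbf{Case $t>0$.} The map $a\mapsto a^t$ is increasing on $(0,\infty)$, so $\sum_i w_i a_i^t\le \sum_i w_i b_i^t$, because every $w_i>0$. The outer map $u\mapsto u^{1/t}$ is increasing on $(0,\infty)$, which preserves the inequality.

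\textbf{Case $t<0$.} The map $a\mapsto a^t$ is decreasing, so $\sum_i w_i a_i^t\ge \sum_i w_i b_i^t>0$. The outer map $u\mapsto u^{1/t}$ is also decreasing, so the inequality reverses a second time and gives $M_t(\w;\mathbf a)\le M_t(\w;\mathbf b)$.

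Alternatively, the case $t<0$ follows from the case $t>0$ via $M_{-t}(\w;\mathbf a)=M_t(\w;\mathbf a^{-1})^{-1}$, mirroring Proposition~\ref{prop:negative-power-mean}, since both inversions are order-reversing.

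Combining the two cases over all $j=1,\ldots,n$ yields $P_t(\w;\mathbf A)\preceq_{\rm d}P_t(\w;\mathbf B)$. There is no real obstacle. The only point needing care is the sign bookkeeping for $t<0$, where both the inner and outer maps reverse order and the two reversals cancel. Existence and uniqueness from Theorem~\ref{thm:existence} are used only implicitly, to make $P_t$ well defined so that (vii) applies.
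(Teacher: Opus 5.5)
Your proposal is correct and follows essentially the paper's proof: both reduce to the diagonal formula \eqref{eq:diagonal-property} and invoke monotonicity of the scalar weighted power mean in each argument. Your explicit sign analysis for $t>0$ and $t<0$, where the two order reversals cancel in the negative case, simply spells out the scalar fact that the paper cites without proof.
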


\begin{proof}
	Apply the corresponding scalar properties of the weighted power mean to
	\eqref{eq:diagonal-property}.
\end{proof}

 	The positive definite power mean satisfies several order-theoretic
	properties that have no literal analogue on $\T_n$.  In particular, the
	L\"owner monotonicity, joint concavity, arithmetic--geometric--harmonic
	matrix inequalities, and Jensen inequalities for positive linear maps rely
	on self-adjoint order.  What remains canonical on $\T_n$ is the scalar order
	of the diagonal, which yields Corollaries~	\ref{cor:power-mean-diagonal} and \ref{cor:diagonal-properties}, together with the full two-sided covariance
	\eqref{eq:two-sided-covariance}.

\section{Divided Differences and Analytic Functions on Triangular Matrices}

We recall several facts concerning analytic functions of upper triangular
matrices, including their representation by divided differences and the
Parlett recurrence.  More detailed discussions can be found in
\cite{dB05}, \cite{HuangLawson26}.

Let $f$ be a real analytic function on a domain containing the positive real axis.  Define the divided differences recursively 
on tuples of distinct entries from the domain of $f$ by
\begin{equation}\label{eq:dd}
f^{[0]}(a_1)=f(a_1),\qquad
f^{[k]}(a_1,\ldots,a_{k+1})
=\frac{f^{[k-1]}(a_1,\ldots,a_k)-f^{[k-1]}(a_2,\ldots,a_{k+1})}{a_1-a_{k+1}}.
\end{equation}

\begin{remark}\label{rmk:ext}
	The Genocchi--Hermite formula gives an integral representation of the
	$k$th divided difference of a sufficiently smooth function $f$.
	For $a_0,\ldots,a_k>0$,
	\begin{equation}\label{Genocchi--Hermite}
	f^{[k]}(a_0,\ldots,a_k)
	=
	\int_{\Delta_k}
	f^{(k)}
	\left(
	\lambda_0a_0+\cdots+\lambda_ka_k
	\right)
	\,d\lambda_1\cdots d\lambda_k,
	\end{equation}
	where
\[
\Delta_k
=
\left\{(\lambda_0,\dots,\lambda_k)\in[0,1]^{k+1}:
\lambda_0+\cdots+\lambda_k=1\right\}.
\]
	The formula remains valid when some of the nodes coincide, with the
	divided difference interpreted in the confluent sense
	\cite[Sec.~9]{dB05}.  
	If $f$ is real analytic, the Genocchi--Hermite representation also shows
	that
	\[
	(a_0,\ldots,a_k)\longmapsto f^{[k]}(a_0,\ldots,a_k)
	\]
	extends from tuples of pairwise distinct nodes to a real-analytic
	function on the full domain, including tuples with repeated nodes.
	Since the set of tuples with pairwise distinct entries is dense, this
	extension is the unique continuous extension of the ordinary divided
	difference.
\end{remark}

We next turn to the use of divided differences to define the extension of $f$ to $\T_n$. For $1\le i<j\le n$, an $(i,j)$-path is a strictly increasing sequence
\[
\gamma=(i=i_1<i_2<\cdots<i_{k+1}=j).
\]
Set $|\gamma|=k$ and
\begin{align}
f^{[\gamma]}(T)&:=f^{[k]}(t_{i_1i_1},\ldots,t_{i_{k+1}i_{k+1}}),\label{eq:f-gamma}\\
\Pi_\gamma(T)&:=t_{i_1i_2}t_{i_2i_3}\cdots t_{i_ki_{k+1}}.\label{eq:Pi-gamma}
\end{align}
Let $\Gamma_{ij}$ be the collection of all $(i,j)$-paths.

\begin{theorem}[Path formula]\label{thm:path}
If $T=[t_{ij}]$ is upper triangular and $f$ is analytic on a neighborhood of its spectrum, then
\begin{equation}\label{eq:path-formula}
(f(T))_{ii}=f(t_{ii}),\qquad
(f(T))_{ij}=\sum_{\gamma\in\Gamma_{ij}}f^{[\gamma]}(T)\Pi_\gamma(T),\quad i<j.
\end{equation}
\end{theorem}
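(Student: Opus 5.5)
We first prove the path formula for polynomials and then pass to analytic $f$ by a limiting argument. Throughout we use the Genocchi--Hermite representation of Remark~\ref{rmk:ext}, which makes each $f^{[\gamma]}(T)$ a continuous (indeed analytic) function of the diagonal of $T$, including at repeated nodes.

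\textbf{Step 1: a generic case by diagonalization.} Assume first that the diagonal entries $t_{11},\ldots,t_{nn}$ are pairwise distinct. The diagonal identity $(f(T))_{ii}=f(t_{ii})$ holds in all cases: $f(T)$ is upper triangular and its $(i,i)$ entry is obtained from the $1\times1$ block $T[i:i]$ via Lemma~\ref{lem:block-functional-calculus}. For $i<j$, Lemma~\ref{lem:block-functional-calculus} lets us work in the principal block $T[i:j]$. We then argue by induction on $j-i$ using the Parlett relation. Since $F=f(T)$ commutes with $T$, the $(i,j)$ entry of $FT-TF=0$ gives
\[
F_{ij}(t_{jj}-t_{ii})=\sum_{k=i+1}^{j}t_{ik}F_{kj}-\sum_{k=i}^{j-1}F_{ik}t_{kj}.
\]
Substitute the inductive path expansions for $F_{kj}$ and $F_{ik}$, where the indices $k$ lie strictly inside so the distance is shorter, together with $F_{ii}=f(t_{ii})$ and $F_{jj}=f(t_{jj})$. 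Each path $\gamma=(i=i_1<i_2<\cdots<i_{k+1}=j)$ arises exactly twice:
\begin{itemize}
\item[(a)] from the first sum, as $t_{ii_2}$ times the path $(i_2,\ldots,j)$;
\item[(b)] from the second sum, as the path $(i,\ldots,i_k)$ times $t_{i_kj}$.
\end{itemize}
The coefficient of $\Pi_\gamma(T)$ on the right is therefore
\[
f^{[k-1]}(t_{i_2i_2},\ldots,t_{jj})-f^{[k-1]}(t_{ii},\ldots,t_{i_ki_k}).
\]
Dividing by $t_{jj}-t_{ii}$ and applying the recursion \eqref{eq:dd} yields $f^{[k]}(t_{i_1i_1},\ldots,t_{i_{k+1}i_{k+1}})$, which is the claimed coefficient. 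The paths of length one, coming from the terms $t_{ij}F_{jj}$ and $F_{ii}t_{ij}$, give the base case $f^{[1]}(t_{ii},t_{jj})\,t_{ij}$. This argument needs only the commutation $f(T)T=Tf(T)$, so it is valid for any analytic $f$ once the diagonal is distinct.

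\textbf{Step 2: removing the distinctness assumption.} For general $T$, perturb the diagonal to $T_\varepsilon=T+\diag(\varepsilon_1,\ldots,\varepsilon_n)$ with distinct diagonal entries and $\varepsilon\to0$. For small $\varepsilon$ the spectrum of $T_\varepsilon$ stays inside a fixed neighborhood $U$ of $\sigma(T)$ on which $f$ is analytic. The Riesz--Dunford integral
\[
f(T)=\frac{1}{2\pi i}\oint f(z)(zI-T)^{-1}\,dz
\]
depends continuously on $T$ in this range, so $f(T_\varepsilon)\to f(T)$. On the other side of the identity, $\Pi_\gamma(T_\varepsilon)=\Pi_\gamma(T)$, because $\Pi_\gamma$ involves only off-diagonal entries. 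Moreover $f^{[\gamma]}(T_\varepsilon)\to f^{[\gamma]}(T)$ by the continuity of the confluent divided difference. For this we want Genocchi--Hermite to be applicable, i.e.\ the convex hull of the nodes should lie in the domain of $f$; this holds in the paper's setting of positive spectrum and $f$ analytic near the positive axis. For a fully general neighborhood, we would instead use the contour form
\[
f^{[k]}(a_0,\ldots,a_k)=\frac{1}{2\pi i}\oint \frac{f(z)}{\prod_{l}(z-a_l)}\,dz.
\]
Passing to the limit in the formula of Step 1 gives \eqref{eq:path-formula}.

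\textbf{Main obstacle.} The step that needs the most care is the combinatorial bookkeeping in Step 1. One must check that every path is produced exactly once by each of the two sums, with the signs matching the orientation $(a_1-a_{k+1})$ in \eqref{eq:dd}. Since the left side carries the factor $t_{jj}-t_{ii}$, the ordering of the nodes in the difference $f^{[k-1]}(\text{first nodes})-f^{[k-1]}(\text{last nodes})$ must be tracked so that the signs agree. The continuity argument in Step 2 is routine by comparison.
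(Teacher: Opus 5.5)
Your proof is correct and takes the same route as the paper: you derive the Parlett relation from $Tf(T)=f(T)T$, induct on $j-i$ for distinct diagonal entries with each path picked up once from each sum (your sign bookkeeping with the factor $t_{jj}-t_{ii}$ agrees with \eqref{eq:dd}), and then perturb the diagonal, using continuity of $f(T_\varepsilon)$ and of the confluent divided differences. Your contour-integral formula in Step 2 handles the general ``neighborhood of the spectrum'' hypothesis more carefully than the paper's appeal to Genocchi--Hermite; note, however, that your opening sentence (polynomials first) and the label ``by diagonalization'' do not describe the argument you actually give.
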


\begin{proof}
	Set
	\[
	F=f(T)=[f_{ij}]
	\qquad\text{and}\qquad
	d_i=t_{ii}.
	\]
	Since analytic functional calculus preserves upper triangularity and
	diagonal entries,
	\[
	f_{ii}=f(d_i),\qquad i=1,\ldots,n.
	\]
	Moreover, $F$ commutes with $T$:
	\[
	TF=FT.
	\]
	Taking the $(i,j)$-entry, where $i<j$, gives
	\[
	d_i f_{ij}
	+t_{ij}f(d_j)
	+\sum_{k=i+1}^{j-1}t_{ik}f_{kj}
	=
	f(d_i)t_{ij}
	+f_{ij}d_j
	+\sum_{k=i+1}^{j-1}f_{ik}t_{kj}.
	\]
	Hence
	\begin{equation}\label{eq:Parlett-entry}
		(d_i-d_j)f_{ij}
		=
		\bigl(f(d_i)-f(d_j)\bigr)t_{ij}
		+
		\sum_{k=i+1}^{j-1}
		\bigl(f_{ik}t_{kj}-t_{ik}f_{kj}\bigr).
	\end{equation}
	
	We first assume that the diagonal entries $d_1,\ldots,d_n$ are pairwise
	distinct.  We prove \eqref{eq:path-formula} by induction on $j-i$.
	If $j=i+1$, then \eqref{eq:Parlett-entry} gives
	\[
	f_{i,i+1}
	=
	\frac{f(d_i)-f(d_{i+1})}{d_i-d_{i+1}}\,t_{i,i+1}
	=
	f^{[1]}(d_i,d_{i+1})t_{i,i+1},
	\]
	which is precisely the path formula, since there is only one path from
	$i$ to $i+1$.
	
	Now let $j-i\geq2$ and assume that the formula holds for all pairs whose
	index difference is smaller than $j-i$.  Substituting the induction
	hypothesis into \eqref{eq:Parlett-entry}, we obtain
	\begin{align}
		(d_i-d_j)f_{ij}
		&=
		\bigl(f(d_i)-f(d_j)\bigr)t_{ij}
		\nonumber\\
		&\quad+
		\sum_{k=i+1}^{j-1}
		\left[
		\left(
		\sum_{\gamma\in\Gamma_{ik}}
		f^{[\gamma]}(T)\Pi_\gamma(T)
		\right)t_{kj}
		-
		t_{ik}
		\left(
		\sum_{\eta\in\Gamma_{kj}}
		f^{[\eta]}(T)\Pi_\eta(T)
		\right)
		\right].
		\label{eq:path-induction}
	\end{align}
	
	The first term on the right-hand side produces the direct path
	$\gamma=(i,j)$ and, after division by $d_i-d_j$, has coefficient
	\[
	\frac{f(d_i)-f(d_j)}{d_i-d_j}
	=
	f^{[1]}(d_i,d_j).
	\]
	
	Consider now a path with at least two edges,
	\[
	\gamma=(i_0,i_1,\ldots,i_r),
	\qquad
	i=i_0<i_1<\cdots<i_r=j,
	\qquad r\geq2.
	\]
	Its path product is
	\[
	\Pi_\gamma(T)
	=
	t_{i_0i_1}t_{i_1i_2}\cdots t_{i_{r-1}i_r}.
	\]
	In \eqref{eq:path-induction}, this path occurs exactly twice.  The first
	occurrence comes from
	$
	f_{i,i_{r-1}}t_{i_{r-1},j}
	$
	and has coefficient
	\[
	f^{[r-1]}(d_{i_0},d_{i_1},\ldots,d_{i_{r-1}}),
	\]
	whereas the second comes from
	$
	-t_{i,i_1}f_{i_1,j}
	$
	and has coefficient
	\[
	-f^{[r-1]}(d_{i_1},\ldots,d_{i_r}).
	\]
	Thus, after division by $d_i-d_j=d_{i_0}-d_{i_r}$, the coefficient of
	$\Pi_\gamma(T)$ is
	\[
	\frac{
		f^{[r-1]}(d_{i_0},\ldots,d_{i_{r-1}})
		-
		f^{[r-1]}(d_{i_1},\ldots,d_{i_r})
	}{
		d_{i_0}-d_{i_r}
	}.
	\]
	By the recursive definition of divided differences, this is exactly
	\[
	f^{[r]}(d_{i_0},d_{i_1},\ldots,d_{i_r})
	=
	f^{[\gamma]}(T).
	\]
	Since every path from $i$ to $j$ is either the direct path $(i,j)$ or
	has at least one intermediate vertex, \eqref{eq:path-formula} follows.
	This completes the induction when the diagonal entries are pairwise
	distinct.
	
It remains to remove the assumption that the diagonal entries are distinct.
Choose diagonal perturbations
\[
T_\varepsilon
=
T+\varepsilon\diag(c_1,\ldots,c_n),
\]
with distinct $c_i$, so that $T_\varepsilon$ has pairwise distinct diagonal
entries for all sufficiently small nonzero $\varepsilon$.  The formula
therefore holds for $T_\varepsilon$.  Since
\[
T_\varepsilon\to T,
\qquad
f(T_\varepsilon)\to f(T),
\]
and divided differences of an analytic function depend continuously on
their nodes, including in the confluent case of repeated nodes (see Remark \ref{rmk:ext}), we may let
$\varepsilon\to0$ to obtain
\[
(f(T))_{ij}
=
\sum_{\gamma\in\Gamma_{ij}}
f^{[\gamma]}(T)\Pi_\gamma(T).
\]
This proves the formula in general.
\end{proof}

For the power mean, the basic scalar function is
\[
p_t(x)=x^t,\qquad x>0.
\]
Its first divided difference is
\begin{equation}\label{eq:power-dd1}
p_t^{[1]}(a,b)=
\begin{cases}
\dfrac{a^t-b^t}{a-b},&a\ne b,\\[1.2ex]
t a^{t-1},&a=b.
\end{cases}
\end{equation}
For $a,b>0$, $p_t^{[1]}(a,b)$ has the same sign as $t$ and is therefore nonzero whenever $t\ne0$.

\begin{lemma} \label{lem:perturb}
Let $T\in\T_n$, $x\in\R$, and let $f$ be analytic on a neighborhood of the spectrum of $T$.  Then
\begin{equation}\label{eq:perturb}
f(T+xE_{1n})=f(T)+x f^{[1]}(t_{11},t_{nn})E_{1n}.
\end{equation}
In particular,
\begin{equation}\label{eq:power-perturb}
(T+xE_{1n})^t=T^t+x p_t^{[1]}(t_{11},t_{nn})E_{1n}.
\end{equation}
\end{lemma}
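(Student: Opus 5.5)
The plan is to apply the path formula, Theorem~\ref{thm:path}, to $S:=T+xE_{1n}$ and compare the result with the same formula for $T$. The entries of $S$ agree with those of $T$ except at position $(1,n)$, where $s_{1n}=t_{1n}+x$. I assume $n\ge2$; for $n=1$ the statement is degenerate. In particular $S$ has the same diagonal as $T$, so $\sigma(S)=\sigma(T)$ and $f$ is analytic near $\sigma(S)$. Every divided difference $f^{[\gamma]}(S)$ therefore equals $f^{[\gamma]}(T)$, including the confluent case handled by Remark~\ref{rmk:ext}.

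First I compare entries with $(i,j)\ne(1,n)$. The diagonal entries agree, since both equal $f(t_{ii})$. For $i<j$ with $(i,j)\ne(1,n)$, no $(i,j)$-path uses the edge $(1,n)$. Indeed, a path containing the edge $(1,n)$ must start at $1$ and end at $n$, because paths are strictly increasing. Hence $\Pi_\gamma(S)=\Pi_\gamma(T)$ for all $\gamma\in\Gamma_{ij}$, and $(f(S))_{ij}=(f(T))_{ij}$.

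Next I treat the $(1,n)$ entry. The only $(1,n)$-path containing the edge $(1,n)$ is the direct path $\gamma_0=(1,n)$. Every other path has an intermediate vertex and avoids that edge, so its product is unchanged. For the direct path, $\Pi_{\gamma_0}(S)=t_{1n}+x$, with coefficient $f^{[1]}(t_{11},t_{nn})$. Subtracting the two path expansions gives
\[
(f(S))_{1n}-(f(T))_{1n}=x\,f^{[1]}(t_{11},t_{nn}).
\]
This proves \eqref{eq:perturb}.

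The power case \eqref{eq:power-perturb} follows by taking $f=p_t$. This function is analytic on a neighborhood of the positive axis, which contains $\sigma(T)$ because $T\in\T_n$. Note that $T+xE_{1n}\in\T_n$ as well.

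There is no serious obstacle. The only point needing care is that confluent divided differences are well defined and depend only on the diagonal. This is exactly what Remark~\ref{rmk:ext} and Theorem~\ref{thm:path} already provide, since Theorem~\ref{thm:path} covers repeated diagonal entries.
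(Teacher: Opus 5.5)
Your proposal is correct and follows the paper's proof: both apply the path formula (Theorem~\ref{thm:path}) and observe that the edge $(1,n)$ occurs only in the length-one path $(1,n)$. Hence only the $(1,n)$-entry changes, and it changes by $x f^{[1]}(t_{11},t_{nn})$. You also make explicit two points the paper leaves implicit: the diagonal, and hence every divided difference, is unchanged, and $n\ge 2$ is needed, since for $n=1$ the identity fails.
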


\begin{proof}
In the path formula \eqref{eq:path-formula}, changing $t_{1n}$ can affect only the $(1,n)$-entry.  The only path involving the direct arc $(1,n)$ is the length-one path $(1,n)$, and its contribution is $f^{[1]}(t_{11},t_{nn})t_{1n}$.  This proves \eqref{eq:perturb}.
\end{proof}

As a special case of Theorem~\ref{thm:path}, the inverse of a unipotent triangular matrix has the following explicit expression.

\begin{lemma}\label{lem:inverse}
If $X=[x_{ij}]\in\T_n$ is unipotent, then
\begin{equation}\label{eq:inverse-path}
(X^{-1})_{ij}=\sum_{\gamma\in\Gamma_{ij}}(-1)^{|\gamma|}\Pi_\gamma(X),\qquad i<j.
\end{equation}
Consequently, for $A=[a_{ij}]\in\T_n$,
\begin{equation}\label{eq:XinvA}
(X^{-1}A)_{ij}=a_{ij}+\sum_{\ell=i+1}^j\left(\sum_{\gamma\in\Gamma_{i\ell}}(-1)^{|\gamma|}\Pi_\gamma(X)\right)a_{\ell j}.
\end{equation}
\end{lemma}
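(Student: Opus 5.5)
The plan is to read \eqref{eq:inverse-path} off Theorem~\ref{thm:path}, applied with $f(x)=x^{-1}$ to the unipotent matrix $X$, and then to obtain \eqref{eq:XinvA} by a single matrix multiplication.

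\emph{Divided differences of $1/x$.} For $f(x)=1/x$ on $(0,\infty)$ and distinct positive nodes one has the classical identity
\[
f^{[k]}(a_0,\ldots,a_k)=\frac{(-1)^k}{a_0a_1\cdots a_k}.
\]
I will check this by induction from \eqref{eq:dd}. The key computation is
\[
\frac{(-1)^{k-1}}{a_0\cdots a_{k-1}}-\frac{(-1)^{k-1}}{a_1\cdots a_k}
=\frac{(-1)^{k-1}(a_k-a_0)}{a_0\cdots a_k}.
\]
Dividing by $a_0-a_k$ gives $(-1)^k/(a_0\cdots a_k)$, as claimed.

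\emph{Confluent case and the unipotent specialization.} The right-hand side is real analytic on $(0,\infty)^{k+1}$. By the uniqueness of the continuous extension in Remark~\ref{rmk:ext}, the identity therefore persists when nodes coincide. For unipotent $X$ every node equals $1$, so $f^{[\gamma]}(X)=(-1)^{|\gamma|}$ for every path $\gamma$. Substituting into \eqref{eq:path-formula} yields \eqref{eq:inverse-path} directly. As an independent check, one can write $X=I+N$ with $N$ strictly upper triangular. Then $X^{-1}=\sum_{k=0}^{n-1}(-1)^kN^k$, and $(N^k)_{ij}$ is exactly the sum of $\Pi_\gamma(X)$ over paths with $|\gamma|=k$. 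This gives the same formula without invoking confluent divided differences.

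\emph{Formula for $X^{-1}A$.} Write $(X^{-1}A)_{ij}=\sum_{\ell}(X^{-1})_{i\ell}a_{\ell j}$. Upper triangularity of both factors restricts the sum to $i\le \ell\le j$. The term $\ell=i$ contributes $(X^{-1})_{ii}a_{ij}=a_{ij}$, since the diagonal entries of $X^{-1}$ are $1$. The terms $i<\ell\le j$ contribute via \eqref{eq:inverse-path}, which gives \eqref{eq:XinvA}. The only step needing care is the confluent extension, and the Neumann-series argument sidesteps it entirely.
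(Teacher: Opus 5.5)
Your proposal is correct and takes the same route as the paper, which presents the lemma as a direct special case of Theorem~\ref{thm:path} with $f(x)=x^{-1}$, where $f^{[k]}(1,\ldots,1)=(-1)^k$, followed by the row--column expansion for $X^{-1}A$. Your Neumann-series check $X^{-1}=\sum_{k}(-1)^kN^k$ is a self-contained alternative that avoids confluent divided differences, but it is not needed.
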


\section{Existence, Uniqueness, and Analyticity on $\T_n$}

In \cite{HuangLawson26} the authors have established the existence, uniqueness, and analyticity of the
Karcher mean defined on the matrix group  $\T_n$.  In this section we obtain the analogous result for
the power means defined on $\T_n$.  In both cases the recursive formulas for the logarithm obtained
via Parlett recurrence play an important role.

Define the diagonal normalization
\begin{equation}\label{eq:D}
	D_t=\diag(d_1,\ldots,d_n),\qquad
	d_p=\left(\sum_{i=1}^m w_i\, (A_i)_{pp}^t\right)^{1/t},
\end{equation}
and set
\begin{equation}\label{eq:normalized-data}
	A_{(i)}=D_t^{-1}A_i=[a_{pq}^{(i)}].
\end{equation}
Then
\begin{equation}\label{eq:diag-normalized}
	\sum_{i=1}^m w_i(a_{pp}^{(i)})^t=1,
	\qquad p=1,\ldots,n.
\end{equation}
For $p<q$, put
\begin{equation}\label{eq:c-t}
	c_{pq}(t)=\sum_{i=1}^m w_i a_{qq}^{(i)}
	p_t^{[1]}(a_{pp}^{(i)},a_{qq}^{(i)}).
\end{equation}
By \eqref{eq:power-dd1}, every summand in \eqref{eq:c-t},
and therefore $c_{pq}(t)$ itself, has the same sign as $t$.
In particular, $c_{pq}(t)\ne 0.$

\begin{theorem}[Existence, uniqueness, and analyticity]\label{thm:existence}
	For every $t\in\mathbb{R}\setminus\{0\}$, every positive weight vector $\w\in\Delta_m^\circ$, and every $A_1,\ldots,A_m\in\T_n$, equation \eqref{eq:power-mean} has a unique solution
	\[
	P_t(\w;A_1,\ldots,A_m)\in\T_n.
	\]
	For fixed $t\ne 0$, this solution $X$ depends real-analytically on the entries of $A_1,\ldots,A_m$ and on the weights in the open simplex $\Delta_m^\circ$.
\end{theorem}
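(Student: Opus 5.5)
We work with the normalized equation \eqref{eq:normalized-power}, which is equivalent to \eqref{eq:power-mean}. We determine the entries of $X$ superdiagonal by superdiagonal, by induction on $n$, and use the principal-block structure of Lemma~\ref{lem:diagonal-block} together with the perturbation formula of Lemma~\ref{lem:perturb}.

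\emph{Reduction to unipotent unknowns.} By Lemma~\ref{lem:diagonal-block}, any solution has $\diag(X)=D_t$ with $D_t$ as in \eqref{eq:D}. We therefore write $X=D_tU$ with $U$ unipotent. Then $X^{-1}A_i=U^{-1}A_{(i)}$, and the equation becomes
\[
F(U):=\sum_{i=1}^m w_i(U^{-1}A_{(i)})^t=I_n .
\]
The diagonal of this equation holds automatically by \eqref{eq:diag-normalized}.

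\emph{Induction on $n$.} The case $n=1$ is the scalar formula. For $n\ge2$, the equation restricted to the blocks $[1:n-1]$ and $[2:n]$ is, by Lemma~\ref{lem:block-functional-calculus} and \eqref{eq:block-product}, the same type of equation in dimension $n-1$ with data $A_{(i)}[1:n-1]$ and $A_{(i)}[2:n]$. By induction these blocks of $U$ are uniquely determined and depend analytically on the data and weights. The two blocks overlap in $[2:n-1]$, and they agree there because both are the unique solution of the block equation on $[2:n-1]$.

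This fixes every entry of $U$ except $u_{1n}$. Moreover, every entry of $F(U)$ other than the $(1,n)$ one is given by these block equations and equals the corresponding entry of $I_n$. So the remaining problem is one scalar equation, $F(U)_{1n}=0$, in the unknown $x=u_{1n}$.

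\emph{The key step: the last entry enters linearly.} Write $U=U_0+xE_{1n}$, where $U_0$ has $(1,n)$-entry zero. Since $E_{1n}$ annihilates strictly upper triangular parts on either side,
\[
U^{-1}=U_0^{-1}-xE_{1n}.
\]
Hence
\[
U^{-1}A_{(i)}=U_0^{-1}A_{(i)}-x\,a^{(i)}_{nn}E_{1n}.
\]
Lemma~\ref{lem:perturb} then gives
\[
(U^{-1}A_{(i)})^t=(U_0^{-1}A_{(i)})^t-x\,a^{(i)}_{nn}\,p_t^{[1]}\bigl(a^{(i)}_{11},a^{(i)}_{nn}\bigr)E_{1n}.
\]
Here we used that the diagonal entries of $U_0^{-1}A_{(i)}$ are the $a^{(i)}_{pp}$. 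Summing over $i$ yields
\[
F(U)_{1n}=\beta-x\,c_{1n}(t),
\qquad
\beta=\Bigl(\sum_i w_i(U_0^{-1}A_{(i)})^t\Bigr)_{1n}.
\]
Because $c_{1n}(t)\neq0$, there is exactly one root, $x=\beta/c_{1n}(t)$. This gives existence and uniqueness, and $X=D_tU$ lies in $\T_n$.

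\emph{Analyticity.} The quantities $d_p$ are analytic in $(A_i,\w)$ since $\sum_i w_i(A_i)^t_{pp}>0$. The matrix $U_0$ is analytic by the induction hypothesis. The map $B\mapsto B^t$ is real-analytic on $\T_n$, for instance through the path formula of Theorem~\ref{thm:path} combined with the real-analytic extension of divided differences in Remark~\ref{rmk:ext}. It follows that $\beta$ and $c_{1n}(t)$ are analytic, and $c_{1n}(t)$ is nonvanishing, so $x$ is analytic.

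I expect the main care to be needed in the linearity step, specifically in checking that the perturbation by $E_{1n}$ passes cleanly through $U^{-1}A_{(i)}$ to the form required by Lemma~\ref{lem:perturb}. A second point to justify is the analytic dependence of divided differences when nodes coincide.
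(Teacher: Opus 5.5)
Your proposal is correct and follows essentially the same route as the paper's proof. Both arguments normalize by $X=D_tU$ and fix the two overlapping $(n-1)\times(n-1)$ principal blocks by induction. They then use $U^{-1}=U_0^{-1}-xE_{1n}$ together with Lemma~\ref{lem:perturb} to see that the corner entry enters the $(1,n)$-equation linearly, with the nonvanishing coefficient $c_{1n}(t)$. Your extra justifications only fill in details the paper leaves implicit: why $E_{1n}$ is annihilated by strictly upper triangular factors, and why $B\mapsto B^t$ is analytic via Theorem~\ref{thm:path} and Remark~\ref{rmk:ext}.
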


\begin{proof}
	Set $X=D_tY$.  Then $X^{-1}A_i=Y^{-1}D_t^{-1}A_i=Y^{-1}A_{(i)}$, so \eqref{eq:normalized-power} for the original data is equivalent to the normalized equation for $A_{(i)}$.  By \eqref{eq:diag-normalized}, any normalized solution $Y$ satisfies $\diag(Y)=I_n$.
	
	We use induction on $n$.  The scalar case follows from \eqref{eq:diagonal}.  Assume the result on $\T_{n-1}$.  We note that the block property uniquely 
	determines the two overlapping principal blocks $Y[1:n-1]$ and $Y[2:n]$.  These two blocks agree on their common
principal block $[2 : n-1]$, as follows from the induction hypothesis and uniqueness
for the corresponding $(n-2)\times (n-2)$ normalized power equation.

	Let $Y_0$ be the unique unipotent triangular matrix having those blocks and $(Y_0)_{1n}=0$.  Then
	\[
	R_0:=\sum_{i=1}^m w_i(Y_0^{-1}A_{(i)})^t-I_n
	\]
	vanishes on both principal $(n-1)\times(n-1)$ blocks, so $R_0=rE_{1n}$.
	
	Any solution must be of the form $Y=Y_0+xE_{1n}$.  Since $Y_0$ is unipotent,
	\[
	Y^{-1}=Y_0^{-1}-xE_{1n},
	\]
	and therefore
	\[
	Y^{-1}A_{(i)}=Y_0^{-1}A_{(i)}-xa_{nn}^{(i)}E_{1n}.
	\]
	By Lemma \ref{lem:perturb},
	\[
	(Y^{-1}A_{(i)})^t=(Y_0^{-1}A_{(i)})^t
	-x a_{nn}^{(i)}p_t^{[1]}(a_{11}^{(i)},a_{nn}^{(i)})E_{1n}.
	\]
	Thus \eqref{eq:normalized-power} is equivalent to
	\[
	(r-xc_{1n}(t))E_{1n}=0.
	\]
	Because $c_{1n}(t)\ne 0$, there is exactly one choice $x=r/c_{1n}(t)$.  The construction uses analytic operations and division by a nonzero analytic function, so the resulting entries depend analytically on the data and weights.  This completes the induction.
\end{proof}

\begin{corollary}
For fixed $t\ne 0$, the map
\[
(\w,A_1,\ldots,A_m)\longmapsto P_t(\w;A_1,\ldots,A_m)
\]
is locally Lipschitz in any system of Euclidean coordinates on the open simplex times $(\T_n)^m$.
\end{corollary}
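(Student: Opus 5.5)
The corollary follows from the real-analytic dependence in Theorem~\ref{thm:existence}, via the standard fact that real-analytic (indeed $C^1$) maps on open subsets of Euclidean space are locally Lipschitz.

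First I fix coordinates. The open simplex $\Delta_m^\circ$ is an open subset of the affine hyperplane $\{\sum w_i=1\}$, so it can be parametrized by $(w_1,\ldots,w_{m-1})$ ranging over an open subset of $\R^{m-1}$. Each $\T_n$ is an open subset of $\R^{n(n+1)/2}$, cut out by $A_{jj}>0$, with the entries as coordinates. The domain $U=\Delta_m^\circ\times(\T_n)^m$ is therefore an open subset of some $\R^N$. Any other Euclidean coordinate system is related to this one by an affine isomorphism, or more generally by a $C^1$ diffeomorphism. Such changes of coordinates preserve local Lipschitz continuity, so it suffices to argue in one chart.

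Next I check that the map is $C^1$. By Theorem~\ref{thm:existence}, for fixed $t\ne0$ each entry of $P_t(\w;A_1,\ldots,A_m)$ is a real-analytic function on $U$. This is the step that carries the real content, and it is already done. The proof builds the diagonal $D_t$ from scalar power means, which are analytic because $\sum_i w_i (A_i)_{pp}^t>0$. It then builds the off-diagonal entries recursively from matrix powers, which are analytic via the path formula and the analyticity of divided differences in Remark~\ref{rmk:ext}. The only divisions are by $c_{pq}(t)$, which never vanishes. Hence the map $F:U\to\R^{n(n+1)/2}$ is $C^\infty$, and in particular $C^1$.

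Finally I apply the mean value inequality. Given $u_0\in U$, I choose a closed ball $\overline{B}(u_0,\rho)\subset U$. It is compact and convex, and $DF$ is continuous on it, so $L=\max_{\overline B}\|DF\|$ is finite. For $u,v$ in the ball the segment $[u,v]$ stays inside it, and
\[
\|F(u)-F(v)\|\le \int_0^1\|DF(v+s(u-v))\|\,\|u-v\|\,ds\le L\|u-v\|.
\]
This gives the local Lipschitz property. There is no real obstacle beyond the analyticity established in Theorem~\ref{thm:existence}. The only care needed is to treat the simplex as an open subset of an affine space, rather than of $\R^m$, so that derivatives are taken along admissible directions.
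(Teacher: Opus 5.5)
Your proposal is correct and follows the same route as the paper, which deduces the corollary from the real-analytic dependence in Theorem~\ref{thm:existence} together with the elementary fact that real-analytic maps are locally Lipschitz on compact coordinate neighborhoods. The only difference is that you spell that fact out, using a chart on the open simplex and the mean value inequality on a closed convex ball.
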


\begin{proof}
The result follows from Theorem \ref{thm:existence} and the elementary fact that a real-analytic map between finite-dimensional manifolds is locally Lipschitz on compact coordinate neighborhoods.
\end{proof}

\section{Recursive Formulas}

\subsection{Entrywise recursion}

For $p<q$, let $X'_{pq}$ denote the unipotent upper triangular matrix that agrees with the already computed entries of $X'$ in the block $[p:q]$, except that its $(p,q)$-entry is set equal to zero; entries outside that block are those of the identity.

\begin{theorem}[Entrywise recursive algorithm]\label{thm:entry-recursion}
With the notation of \eqref{eq:D}--\eqref{eq:c-t},
\[
P_t(\w;A_1,\ldots,A_m)=D_tX',
\qquad \diag(X')=I_n,
\]
and the strictly upper triangular entries of $X'=[x'_{pq}]$ are determined recursively, superdiagonal by superdiagonal, by
\begin{equation}\label{eq:entry-recursion}
x'_{pq}
=
\frac{1}{c_{pq}(t)}
\sum_{i=1}^m w_i
\sum_{\gamma\in\Gamma_{pq}}
 p_t^{[\gamma]}(A_{(i)})
\Pi_\gamma\bigl((X'_{pq})^{-1}A_{(i)}\bigr),
\qquad p<q.
\end{equation}
The entries of $(X'_{pq})^{-1}A_{(i)}$ are given explicitly by \eqref{eq:XinvA}.
\end{theorem}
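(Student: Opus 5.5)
The plan is to read the formula off the inductive construction in the proof of Theorem~\ref{thm:existence}, applied to each principal block. First I would recall from that proof that $P_t(\w;\mathbf A)=D_tX'$, where $X'$ is the unique unipotent solution of the normalized equation
\[
\sum_{i=1}^m w_i\bigl((X')^{-1}A_{(i)}\bigr)^t=I_n .
\]
By Lemma~\ref{lem:diagonal-block}, for every $p<q$ the block $X'[p:q]$ is the unique unipotent solution of the same equation for the blocks $A_{(i)}[p:q]$. Their diagonals still satisfy \eqref{eq:diag-normalized}, so the block problem is again normalized.

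Next I would apply the final step of the existence proof to the block problem of size $q-p+1$, with the corner $(1,n)$ replaced by $(p,q)$. The entries of $X'[p:q]$ other than the $(p,q)$-entry lie in $X'[p:q-1]$ or $X'[p+1:q]$. These lie on lower superdiagonals, so they have already been computed when one proceeds superdiagonal by superdiagonal. Hence $X'_{pq}$ restricted to $[p:q]$ is exactly the matrix $Y_0$ of that proof. The proof then gives $x'_{pq}=r/c_{pq}(t)$. Here $r$ is the $(p,q)$-entry of $\sum_i w_i\bigl((X'_{pq})^{-1}A_{(i)}\bigr)^t-I$; the corner is off-diagonal, so $I$ contributes nothing. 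The constant $c_{pq}(t)$ arises exactly as $c_{1n}(t)$ does, via Lemma~\ref{lem:perturb}, using the diagonal entries $a^{(i)}_{pp}$ and $a^{(i)}_{qq}$. Working in the full $n\times n$ matrix or in the block gives the same $(p,q)$-entries, by \eqref{eq:block-product} and Lemma~\ref{lem:block-functional-calculus}.

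It then remains to evaluate $r$. Set $T_i=(X'_{pq})^{-1}A_{(i)}$ and apply the path formula, Theorem~\ref{thm:path}, with $f=p_t$:
\[
\bigl(T_i^t\bigr)_{pq}=\sum_{\gamma\in\Gamma_{pq}}p_t^{[\gamma]}(T_i)\,\Pi_\gamma(T_i).
\]
Since $X'_{pq}$ is unipotent upper triangular, $T_i$ has the same diagonal as $A_{(i)}$, so $p_t^{[\gamma]}(T_i)=p_t^{[\gamma]}(A_{(i)})$. This uses the confluent divided differences of Remark~\ref{rmk:ext} when diagonal entries coincide. Summing with the weights $w_i$ and dividing by $c_{pq}(t)\neq0$ gives \eqref{eq:entry-recursion}. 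The entries of $T_i$ entering $\Pi_\gamma$ are given by \eqref{eq:XinvA} applied to $X'_{pq}$.

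I expect no deep obstacle. The delicate point is bookkeeping: one must check that the matrix $X'_{pq}$ in the statement coincides on $[p:q]$ with the $Y_0$ of the existence proof. One must also check that the sign conventions, $R_0=rE_{1n}$ and the equation $r-xc=0$, produce $+r/c_{pq}(t)$ rather than its negative. Both reduce to the observation that the $(p,q)$-entry of $X'$ enters the normalized equation only through the linear term supplied by Lemma~\ref{lem:perturb}.
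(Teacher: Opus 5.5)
Your proposal is correct and follows the paper's own proof essentially verbatim. Like the paper, you apply the corner step of the existence proof to the principal block $[p:q]$, evaluate the residual $r$ with the path formula, and solve the linear equation whose coefficient $c_{pq}(t)$ comes from Lemma~\ref{lem:perturb}. The extra bookkeeping you supply fills in details the paper leaves implicit: identifying $X'_{pq}$ on $[p:q]$ with $Y_0$, noting that $T_i$ has the same diagonal as $A_{(i)}$, and checking the sign of $r/c_{pq}(t)$.
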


\begin{proof}
Apply the induction argument in Theorem \ref{thm:existence} to the principal block $[p:q]$.  With its unknown corner entry set to zero, the $(p,q)$-entry of
\[
\sum_i w_i\bigl((X'_{pq})^{-1}A_{(i)}\bigr)^t-I
\]
is the numerator in \eqref{eq:entry-recursion}, by Theorem \ref{thm:path}.  Changing the corner entry from $0$ to $x'_{pq}$ changes this quantity by $-c_{pq}(t)x'_{pq}$, by Lemma \ref{lem:perturb}.  Solving the resulting linear equation gives \eqref{eq:entry-recursion}.
\end{proof}

We illustrate the algorithm by deriving explicit formulas on $\T_2$ and $\T_3$.

Recall the notation $p_t(x)=x^t$ and the first divided difference in \eqref{eq:power-dd1}.

\begin{theorem}\label{thm:UT2}
	For $t\ne 0$, the $\w$-weighted power mean of $A_1,\ldots,A_m\in \T_2$ exists uniquely and is given by
	\begin{equation}\label{eq:UT2}
		P_t(\w;A_1,\ldots,A_m)
		=
		\begin{bmatrix}
			\bar a_{11}&x_{12}\\0&\bar a_{22}
		\end{bmatrix},
	\end{equation}
	where 
	\begin{equation}\label{eq:bars}
		\bar a_{11}=\left(\sum_{k=1}^m w_k(A_k)_{11}^t\right)^{1/t},
		\qquad
		\bar a_{22}=\left(\sum_{k=1}^m w_k(A_k)_{22}^t\right)^{1/t},
	\end{equation}
	\begin{equation}
		A_{(k)}=\diag(\bar a_{11}, \bar a_{22})^{-1} A_k=\mtx{a_{ij}^{(k)}},\qquad k=1,2,\ldots,m,
	\end{equation}
	and 
	\begin{equation}\label{eq:UT2-x}
		x_{12}
		=\bar a_{11}
		\frac{\sum_{k=1}^m w_k \,
			p_t^{[1]}\left( a_{11}^{(k)} , a_{22}^{(k)} \right)  a_{12}^{(k)}}{\sum_{k=1}^m w_k \,
			p_t^{[1]}\left( a_{11}^{(k)} , a_{22}^{(k)} \right) a_{22}^{(k)}}.
	\end{equation}
	The entries depend analytically on the entries of $A_1,\ldots,A_m$.
\end{theorem}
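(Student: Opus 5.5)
Existence, uniqueness and analyticity are already provided by Theorem~\ref{thm:existence}, so the task is to specialize the construction in its proof (equivalently Theorem~\ref{thm:entry-recursion}) to $n=2$. The diagonal entries $\bar a_{11},\bar a_{22}$ are forced by Lemma~\ref{lem:diagonal-block}, formula \eqref{eq:diagonal}. This gives $D_t=\diag(\bar a_{11},\bar a_{22})$, and the normalized data $A_{(k)}=D_t^{-1}A_k$ satisfy \eqref{eq:diag-normalized}. We then write $P_t=D_tY$ with $Y=\mtx{1&y\\0&1}$, so that $x_{12}=\bar a_{11}\,y$, since left multiplication by $D_t$ scales the first row by $\bar a_{11}$.

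To determine $y$, we start from $Y^{-1}=\mtx{1&-y\\0&1}$. This gives
\[
Y^{-1}A_{(k)}=\mtx{a_{11}^{(k)}& a_{12}^{(k)}-y\,a_{22}^{(k)}\\0&a_{22}^{(k)}}.
\]
By the path formula (Theorem~\ref{thm:path}) with the single path $(1,2)$, the $(1,2)$-entry of $(Y^{-1}A_{(k)})^t$ is
\[
p_t^{[1]}\bigl(a_{11}^{(k)},a_{22}^{(k)}\bigr)\bigl(a_{12}^{(k)}-y\,a_{22}^{(k)}\bigr).
\]
The diagonal entries of $\sum_k w_k(Y^{-1}A_{(k)})^t$ already equal $1$ by \eqref{eq:diag-normalized}. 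So \eqref{eq:normalized-power} reduces to the single linear equation
\[
\sum_k w_k\, p_t^{[1]}\bigl(a_{11}^{(k)},a_{22}^{(k)}\bigr)\bigl(a_{12}^{(k)}-y\,a_{22}^{(k)}\bigr)=0.
\]
The coefficient of $y$ is $-c_{12}(t)$, which is nonzero because every summand has the sign of $t$, by \eqref{eq:power-dd1}. Solving for $y$ and multiplying by $\bar a_{11}$ yields \eqref{eq:UT2-x}. Since the solution is forced at each step, uniqueness is reconfirmed directly.

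For analyticity, $\bar a_{jj}$ is analytic in the data because it is a positive power of a positive analytic expression. The entries $a^{(k)}_{ij}$ are then analytic, and $p_t^{[1]}$ is real analytic on $(0,\infty)^2$, including the confluent diagonal, by Remark~\ref{rmk:ext}. The only point needing care is that the denominator $c_{12}(t)$ never vanishes, which we have already checked; hence $x_{12}$ is a quotient of analytic functions with nonvanishing denominator. There is no real obstacle: the main thing to get right is the bookkeeping of the normalization factor $\bar a_{11}$ coming from $X=D_tY$.
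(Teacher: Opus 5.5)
Your proposal is correct and is essentially the paper's proof: the diagonal is forced by the scalar power means, the path formula gives the $(1,2)$-entry of $(X^{-1}A_k)^t$ as $p_t^{[1]}(a_{11}^{(k)},a_{22}^{(k)})$ times the $(1,2)$-entry of $X^{-1}A_k$, and the resulting linear equation has coefficient $-c_{12}(t)\neq 0$; your factorization $X=D_tY$ with $x_{12}=\bar a_{11}y$ is just the paper's term $x_{12}a_{22}^{(k)}/\bar a_{11}$ written in normalized form. One cosmetic point: for $t<0$ the exponent $1/t$ is negative, so say ``a real power of a positive analytic expression'' rather than ``a positive power''; the analyticity conclusion is unchanged.
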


\begin{proof}
	The diagonal equations in \eqref{eq:normalized-power} are exactly \eqref{eq:bars}.  By Theorem \ref{thm:path}, the $(1,2)$-entry of $(X^{-1}A_k)^t$ equals
	\[
		p_t^{[1]}\left( a_{11}^{(k)} , a_{22}^{(k)} \right) \left( a_{12}^{(k)} -
	 x_{12} \frac{a_{22}^{(k)}}{\bar a_{11}} \right).
	\]
	The $(1,2)$-entry of \eqref{eq:normalized-power} must vanish, so
	\[
	0=\sum_{k=1}^m w_k\, p_t^{[1]}\left( a_{11}^{(k)} , a_{22}^{(k)} \right) \left( a_{12}^{(k)} -
	x_{12} \frac{a_{22}^{(k)}}{\bar a_{11}} \right).
	\]
	Solving for $x_{12}$ gives \eqref{eq:UT2-x}.  All $p_t^{[1]}\left( a_{11}^{(k)} , a_{22}^{(k)} \right)$ have the same nonzero sign as $t$, 
	hence the denominator is nonzero.  Analyticity follows from analyticity of divided differences.
\end{proof}

Now consider the power means on $\T_3$. 
Let
\[
X'=\begin{bmatrix}
1&x'_{12}&x'_{13}\\
0&1&x'_{23}\\
0&0&1
\end{bmatrix}.
\]
Then
\begin{align}
x'_{12}
&=\frac{1}{c_{12}(t)}\sum_{i=1}^m w_i
p_t^{[1]}(a_{11}^{(i)},a_{22}^{(i)})a_{12}^{(i)},\label{eq:x12}\\
x'_{23}
&=\frac{1}{c_{23}(t)}\sum_{i=1}^m w_i
p_t^{[1]}(a_{22}^{(i)},a_{33}^{(i)})a_{23}^{(i)}.\label{eq:x23}
\end{align}
For the corner entry, define
\begin{align*}
u_i&:=a_{12}^{(i)}-x'_{12}a_{22}^{(i)},\\
v_i&:=a_{23}^{(i)}-x'_{23}a_{33}^{(i)},\\
z_i&:=a_{13}^{(i)}-x'_{12}a_{23}^{(i)}+x'_{12}x'_{23}a_{33}^{(i)}.
\end{align*}

\begin{theorem}\label{thm:UT3}
For $t\ne 0$ and $A_1,\ldots,A_m\in\T_3$,
\[
P_t(\w;A_1,\ldots,A_m)=D_tX',
\]
where $x'_{12}$ and $x'_{23}$ are given by \eqref{eq:x12}--\eqref{eq:x23} and
\begin{equation}\label{eq:x13}
x'_{13}
=\frac{1}{c_{13}(t)}\sum_{i=1}^m w_i
\left[
p_t^{[1]}(a_{11}^{(i)},a_{33}^{(i)})z_i
+p_t^{[2]}(a_{11}^{(i)},a_{22}^{(i)},a_{33}^{(i)})u_i v_i
\right].
\end{equation}
\end{theorem}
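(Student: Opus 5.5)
The plan is to specialize the entrywise recursion of Theorem~\ref{thm:entry-recursion} to $n=3$ and evaluate each ingredient explicitly. First, the factorization $P_t=D_tX'$ with $\diag(X')=I_3$ is already part of Theorem~\ref{thm:entry-recursion}. The superdiagonal entries come from the blocks $[1:2]$ and $[2:3]$; by principal-block consistency \eqref{eq:block-consistency} these reduce to the $\T_2$ case. Applying \eqref{eq:entry-recursion} with $(p,q)=(1,2)$ gives \eqref{eq:x12}: the only path is $(1,2)$, and with the corner entry set to zero the matrix $(X'_{12})^{-1}A_{(i)}$ restricted to $[1:2]$ is just $A_{(i)}[1:2]$. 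The case $(p,q)=(2,3)$, giving \eqref{eq:x23}, is identical. As a consistency check, this agrees with \eqref{eq:UT2-x} after accounting for the normalization by $\bar a_{11}$.

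For the corner entry, let $Y_0$ be the unipotent matrix with entries $x'_{12},x'_{23}$ and $(1,3)$-entry $0$. By \eqref{eq:inverse-path},
\[
Y_0^{-1}=\mtx{1&-x'_{12}&x'_{12}x'_{23}\\0&1&-x'_{23}\\0&0&1}.
\]
Multiplying by $A_{(i)}$ and using \eqref{eq:XinvA}, the matrix $B_i:=Y_0^{-1}A_{(i)}$ has diagonal entries $a^{(i)}_{jj}$, its $(1,2)$-entry is $u_i$, its $(2,3)$-entry is $v_i$, and its $(1,3)$-entry is
\[
a^{(i)}_{13}-x'_{12}a^{(i)}_{23}+x'_{12}x'_{23}a^{(i)}_{33}=z_i.
\]
This matrix computation is the main thing to get right. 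In particular one must check that the off-diagonal entries of $B_i$ in the $(1,2)$ and $(2,3)$ positions pick up only the diagonal entries $a_{22}^{(i)}$ and $a_{33}^{(i)}$ of $A_{(i)}$, which holds because $Y_0^{-1}$ is unipotent.

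Next, apply the path formula (Theorem~\ref{thm:path}) to $B_i^t$. The $(1,3)$-paths are $(1,3)$ and $(1,2,3)$, so
\[
(B_i^t)_{13}=p_t^{[1]}(a^{(i)}_{11},a^{(i)}_{33})z_i+p_t^{[2]}(a^{(i)}_{11},a^{(i)}_{22},a^{(i)}_{33})u_iv_i .
\]
Then, as in the proof of Theorem~\ref{thm:existence}, write $Y=Y_0+x'_{13}E_{13}$. Lemma~\ref{lem:perturb} shows that the $(1,3)$-entry of $\sum_i w_i(Y^{-1}A_{(i)})^t-I_3$ equals
\[
\sum_i w_i (B_i^t)_{13}-x'_{13}c_{13}(t).
\]
Note that the $-I_3$ term does not contribute to the off-diagonal entry. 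Setting this expression to zero and dividing by $c_{13}(t)\ne0$ yields \eqref{eq:x13}.

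The remaining entries of the equation vanish automatically by the block equations. Uniqueness follows from Theorem~\ref{thm:existence}.
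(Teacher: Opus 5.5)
Your proposal is correct and follows the paper's proof. The first superdiagonal comes from the entrywise recursion, where the only path is the single edge. The corner entry comes from applying the two-path formula for $(T^t)_{13}$ to $T=(X'_{13})^{-1}A_{(i)}$ (your $Y_0^{-1}A_{(i)}$), whose off-diagonal entries are $u_i,v_i,z_i$ by \eqref{eq:XinvA}; you just write out explicitly the linear solve via Lemma~\ref{lem:perturb} that the paper packages into Theorem~\ref{thm:entry-recursion}.
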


\begin{proof}
The formulas for the first superdiagonal follow directly from Theorem \ref{thm:entry-recursion}.  For $x'_{13}$, apply the $3\times3$ case of the path formula
\[
(T^t)_{13}=p_t^{[1]}(t_{11},t_{33})t_{13}
+p_t^{[2]}(t_{11},t_{22},t_{33})t_{12}t_{23}
\]
to $T=(X'_{13})^{-1}A_{(i)}$.  Formula \eqref{eq:XinvA} gives precisely $t_{12}=u_i$, $t_{23}=v_i$, and $t_{13}=z_i$.
\end{proof}

\subsection{A block recursive algorithm}

For $A\in\T_\ell$ and $a>0$, define
$
p_t^{[1]}(A,aI)
$
by applying analytic functional calculus to the scalar function
\[
z\longmapsto p_t^{[1]}(z,a).
\]
By the Genocchi--Hermite formula \eqref{Genocchi--Hermite}, this function is analytic on
$(0,\infty)$.
If $a\notin\sigma(A)$, then
\begin{equation}\label{eq:block-power-explicit}
	p_t^{[1]}(A,aI)
	=
	(A^t-a^tI)(A-aI)^{-1}.
\end{equation}

The block Parlett formula for $p_t(x)=x^t$ takes the following form.

\begin{lemma}\label{lem:block-power}
	Let
	\[
	T=
	\begin{bmatrix}
		T_{11}&T_{12}\\
		0&a
	\end{bmatrix}
	\in\T_n.
	\]
	Then
	\begin{equation}\label{eq:block-power}
		T^t=
		\begin{bmatrix}
			T_{11}^t & p_t^{[1]}(T_{11},aI)T_{12}\\
			0&a^t
		\end{bmatrix}.
	\end{equation}
\end{lemma}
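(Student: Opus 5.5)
The plan is to prove the block formula first when $a\notin\sigma(T_{11})$, and then remove that restriction by a continuity argument, exactly as in the proof of Theorem~\ref{thm:path}.

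\textbf{Easy blocks.} By Lemma~\ref{lem:block-functional-calculus}, the diagonal blocks of $T^t$ are $(T[1:n-1])^t=T_{11}^t$ and $a^t$. Upper triangularity is preserved by the functional calculus, so $T^t$ has the block form
\[
T^t=\begin{bmatrix}T_{11}^t & F_{12}\\ 0 & a^t\end{bmatrix}
\]
for some column $F_{12}$. It remains to identify $F_{12}$.

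\textbf{Generic case $a\notin\sigma(T_{11})$.} Since $T^t$ commutes with $T$, comparing the $(1,2)$ blocks of $TT^t=T^tT$ gives
\[
T_{11}F_{12}+T_{12}a^t=T_{11}^tT_{12}+F_{12}a,
\]
that is,
\[
(T_{11}-aI)F_{12}=(T_{11}^t-a^tI)T_{12}.
\]
The matrix $T_{11}-aI$ is invertible, and it commutes with $T_{11}^t-a^tI$, since both are functions of $T_{11}$. Hence
\[
F_{12}=(T_{11}-aI)^{-1}(T_{11}^t-a^tI)T_{12}.
\]
By \eqref{eq:block-power-explicit}, this is $p_t^{[1]}(T_{11},aI)T_{12}$.

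\textbf{General case.} When $a\in\sigma(T_{11})$, perturb the corner: replace $a$ by $a+\varepsilon$ with $\varepsilon\ne0$ small, so that $a+\varepsilon\notin\sigma(T_{11})$. The formula then holds for the perturbed matrix $T_\varepsilon$. As $\varepsilon\to0$:
\begin{itemize}
\item the left side $T_\varepsilon^t\to T^t$, because the functional calculus is continuous in the matrix;
\item the right side $p_t^{[1]}(T_{11},(a+\varepsilon)I)T_{12}\to p_t^{[1]}(T_{11},aI)T_{12}$.
\end{itemize}
For the second limit, Remark~\ref{rmk:ext} shows that $(z,b)\mapsto p_t^{[1]}(z,b)$ is jointly real-analytic on $(0,\infty)^2$. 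So $z\mapsto p_t^{[1]}(z,a+\varepsilon)$ converges to $z\mapsto p_t^{[1]}(z,a)$ uniformly on a complex neighborhood of $\sigma(T_{11})$. Convergence of the functional calculus then follows from the Cauchy integral representation.

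\textbf{Main obstacle.} The only delicate point is this last convergence in the confluent case. If the Cauchy-integral route is awkward, the fallback is to read the formula entrywise through the path formula (Theorem~\ref{thm:path}). Each entry of $p_t^{[1]}(T_{11},aI)$ is a finite sum of divided differences of $z\mapsto p_t^{[1]}(z,a)$ times path products. By the recursive definition, these are precisely divided differences of $p_t$ with $a$ appended as the last node. This reproduces the path-formula expression for the last column of $T^t$ directly, and avoids the limit altogether.
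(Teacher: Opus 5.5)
Your argument is correct and follows the route the paper intends. The paper states this lemma without proof, calling it the ``block Parlett formula.'' You derive $(T_{11}-aI)F_{12}=(T_{11}^t-a^tI)T_{12}$ from $TT^t=T^tT$, solve it via \eqref{eq:block-power-explicit} when $a\notin\sigma(T_{11})$, and extend by perturbing $a$; this is the block analogue of the commutation-plus-perturbation proof of Theorem~\ref{thm:path}.

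Your path-formula fallback also works, with one small correction. Identifying divided differences of $z\mapsto p_t^{[1]}(z,a)$ with divided differences of $p_t$ having $a$ appended uses the symmetry of divided differences (e.g.\ from \eqref{Genocchi--Hermite}), not the recursion \eqref{eq:dd} alone.
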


\begin{theorem}[Block recursive algorithm]\label{thm:block-recursion}
Let $X=P_t(\w;A_1,\ldots,A_m)=D_tX'$ and $A_{(i)}=D_t^{-1}A_i$.  Define
\[
X'_{\ell}=X'[1:\ell],\qquad A_{(i,\ell)}=A_{(i)}[1:\ell].
\]
Suppose $X'_{\ell-1}$ is known.  Write
\[
X'_{\ell}=\begin{bmatrix}X'_{\ell-1}&\y'_{\ell-1}\\0&1\end{bmatrix},
\qquad
A_{(i,\ell)}=\begin{bmatrix}A_{(i,\ell-1)}&\b_{(i,\ell-1)}\\0&a_{\ell\ell}^{(i)}\end{bmatrix},
\]
and set
\begin{equation}\label{eq:G-power}
G_{i,\ell-1}^{(t)}
:=p_t^{[1]}\left((X'_{\ell-1})^{-1}A_{(i,\ell-1)},a_{\ell\ell}^{(i)}I_{\ell-1}\right).
\end{equation}
Then
\begin{equation}\label{eq:block-recursion}
\y'_{\ell-1}
=X'_{\ell-1}
\left(\sum_{i=1}^m w_i a_{\ell\ell}^{(i)}G_{i,\ell-1}^{(t)}\right)^{-1}
\left[\sum_{i=1}^m w_iG_{i,\ell-1}^{(t)}(X'_{\ell-1})^{-1}\b_{(i,\ell-1)}\right].
\end{equation}
Thus $X'$ can be computed in $(n-1)$ block-recursive stages.
\end{theorem}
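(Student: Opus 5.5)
We prove \eqref{eq:block-recursion} by computing the last column of the normalized equation on the leading block $[1:\ell]$, using Lemma~\ref{lem:block-power} in place of the scalar path formula. By Lemma~\ref{lem:diagonal-block} (block equation \eqref{eq:block-equation} with $[p:q]=[1:\ell]$), the matrix $X'_\ell$ satisfies
\[
\sum_{i=1}^m w_i\bigl((X'_\ell)^{-1}A_{(i,\ell)}\bigr)^t=I_\ell .
\]
The leading $(\ell-1)\times(\ell-1)$ block of this equation is the block equation for $[1:\ell-1]$, which holds because $X'_{\ell-1}$ is known to be the normalized mean there. The $(\ell,\ell)$ entry holds by \eqref{eq:diag-normalized}. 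So the only new information is the last column above the diagonal, and the plan is to show that this column is an affine equation in $\y'_{\ell-1}$ that can be solved explicitly.

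First we write the inverse in block form:
\[
(X'_\ell)^{-1}=\begin{bmatrix}(X'_{\ell-1})^{-1}&-(X'_{\ell-1})^{-1}\y'_{\ell-1}\\0&1\end{bmatrix}.
\]
Multiplying by $A_{(i,\ell)}$ gives
\[
T_i:=(X'_\ell)^{-1}A_{(i,\ell)}=\begin{bmatrix}(X'_{\ell-1})^{-1}A_{(i,\ell-1)}&(X'_{\ell-1})^{-1}\bigl(\b_{(i,\ell-1)}-a^{(i)}_{\ell\ell}\y'_{\ell-1}\bigr)\\0&a^{(i)}_{\ell\ell}\end{bmatrix}.
\]
This $T_i$ lies in $\T_\ell$ and has the shape required by Lemma~\ref{lem:block-power}. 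Applying the lemma, the upper-right block of $T_i^t$ is
\[
G^{(t)}_{i,\ell-1}(X'_{\ell-1})^{-1}\bigl(\b_{(i,\ell-1)}-a^{(i)}_{\ell\ell}\y'_{\ell-1}\bigr),
\]
with $G^{(t)}_{i,\ell-1}$ exactly as in \eqref{eq:G-power}. Setting the weighted sum of these blocks to zero gives
\[
\Bigl(\sum_i w_i a^{(i)}_{\ell\ell}G^{(t)}_{i,\ell-1}\Bigr)(X'_{\ell-1})^{-1}\y'_{\ell-1}=\sum_i w_iG^{(t)}_{i,\ell-1}(X'_{\ell-1})^{-1}\b_{(i,\ell-1)}.
\]
Inverting the bracketed matrix and multiplying on the left by $X'_{\ell-1}$ yields \eqref{eq:block-recursion}, provided the bracketed matrix is invertible.

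The main obstacle is this invertibility. Let $M=\sum_i w_i a^{(i)}_{\ell\ell}G^{(t)}_{i,\ell-1}$. Each $G^{(t)}_{i,\ell-1}$ is an analytic function of an upper triangular matrix, so it is upper triangular. Its diagonal entries are the scalars $p_t^{[1]}\bigl(a^{(i)}_{jj},a^{(i)}_{\ell\ell}\bigr)$, because the diagonal of $(X'_{\ell-1})^{-1}A_{(i,\ell-1)}$ is $a^{(i)}_{jj}$ when $X'$ is unipotent. Hence $M$ is upper triangular with diagonal entries $c_{j\ell}(t)$ from \eqref{eq:c-t}. These are all nonzero, having the sign of $t$ by \eqref{eq:power-dd1}, so $M$ is invertible.

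This also shows the linear system has a unique solution, consistent with Theorem~\ref{thm:existence}; since $X'$ is that solution, its last column must be \eqref{eq:block-recursion}. Iterating over $\ell=2,\ldots,n$, starting from $X'_1=1$, gives the $(n-1)$ stages.
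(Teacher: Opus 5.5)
Your proposal is correct and follows the same route as the paper: restrict the normalized equation to the block $[1:\ell]$, write $(X'_\ell)^{-1}A_{(i,\ell)}$ in block form, apply Lemma~\ref{lem:block-power} to get the upper-right block, and solve the resulting linear equation for $\y'_{\ell-1}$. Your invertibility argument also matches the paper's, identifying the diagonal entries of $\sum_i w_i a^{(i)}_{\ell\ell}G^{(t)}_{i,\ell-1}$ with the coefficients $c_{j\ell}(t)$, which are nonzero because they have the sign of $t$.
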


\begin{proof}
The normalized power equation on the $\ell\times\ell$ block is
\[
\sum_{i=1}^m w_i\bigl((X'_{\ell})^{-1}A_{(i,\ell)}\bigr)^t=I_\ell.
\]
Now
\[
(X'_{\ell})^{-1}A_{(i,\ell)}
=\begin{bmatrix}
(X'_{\ell-1})^{-1}A_{(i,\ell-1)} &
(X'_{\ell-1})^{-1}\b_{(i,\ell-1)}-a_{\ell\ell}^{(i)}(X'_{\ell-1})^{-1}\y'_{\ell-1}\\
0&a_{\ell\ell}^{(i)}
\end{bmatrix}.
\]
Using Lemma \ref{lem:block-power}, the upper-right block of the normalized power equation is
\[
0=\sum_{i=1}^m w_iG_{i,\ell-1}^{(t)}
\left[(X'_{\ell-1})^{-1}\b_{(i,\ell-1)}-a_{\ell\ell}^{(i)}(X'_{\ell-1})^{-1}\y'_{\ell-1}\right].
\]
Solving gives \eqref{eq:block-recursion}.  The matrix to be inverted is upper triangular, and its $p$th diagonal entry equals
\[
\sum_{i=1}^m w_i a_{\ell\ell}^{(i)}p_t^{[1]}(a_{pp}^{(i)},a_{\ell\ell}^{(i)}),
\]
which is nonzero because all summands have the sign of $t$.  Hence the inverse exists.
\end{proof}

\section{The Karcher Mean as the Limit $t\to0$}

Lim and P\'alfia \cite{LimPalfia2012} proved that, on the cone of positive definite matrices, their power means converge to the Karcher mean as $t\to0$.  We now establish the corresponding   limit on $\T_n$ by exploiting the entrywise recursion developed above.

Equation \eqref{eq:normalized-power} becomes trivial at $t=0$, so the correct limiting equation is obtained by subtracting the identity and dividing by $t$:
\begin{equation}\label{eq:rescaled}
\sum_{i=1}^m w_i\frac{(X^{-1}A_i)^t-I_n}{t}=0.
\end{equation}
Since
\[
\lim_{t\to0}\frac{B^t-I_n}{t}=\log B
\qquad(B\in\T_n),
\]
we formally recover the Karcher equation:
\begin{equation}\label{eq:Karcher-limit}
\sum_{i=1}^m w_i\log(X^{-1}A_i)=0.
\end{equation}

\begin{theorem}\label{thm:limit}
Let $A_1,\ldots,A_m\in\T_n$ and let $G(\w;A_1,\ldots,A_m)$ denote the unique weighted Karcher mean satisfying \eqref{eq:Karcher-limit}.  Then
\begin{equation}\label{eq:limit}
\lim_{t\to0}P_t(\w;A_1,\ldots,A_m)
=G(\w;A_1,\ldots,A_m).
\end{equation}
Moreover, the recursive formulas for $P_t$ converge entrywise to the corresponding logarithmic divided-difference formulas for the Karcher mean.
\end{theorem}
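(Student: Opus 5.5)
We prove \eqref{eq:limit} by induction on $n$, following the recursive construction in the proof of Theorem~\ref{thm:existence}. The key point is that the rescaled equation \eqref{eq:rescaled} extends real-analytically to $t=0$, where it becomes the Karcher equation \eqref{Karcher equation}. Define, for all $t\in\R$,
\[
\phi_t(x)=\frac{x^t-1}{t}\ (t\ne0),\qquad \phi_0(x)=\log x .
\]
Then $(t,x)\mapsto\phi_t(x)=\int_0^1 x^{st}\log x\,ds$ is real-analytic on $\R\times(0,\infty)$. For $t\ne0$, equation \eqref{eq:normalized-power} is equivalent to $\sum_i w_i\phi_t(X^{-1}A_i)=0$. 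Also, $\phi_t^{[k]}=p_t^{[k]}/t$ for $k\ge1$, and these divided differences depend analytically on $(t,\text{nodes})$ jointly. This follows from the Genocchi--Hermite formula \eqref{Genocchi--Hermite}, since $\partial_x^k\phi_t(x)$ is jointly analytic.

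\textbf{Diagonal.} Each diagonal entry is $d_p(t)=M_t(\w;(A_1)_{pp},\ldots,(A_m)_{pp})$. By the classical scalar limit, $d_p(t)\to\prod_i (A_i)_{pp}^{w_i}$, which is the diagonal of $G$. More strongly, $d_p(t)$ is the unique root of $\sum_i w_i\phi_t((A_i)_{pp}/d)=0$. The $d$-derivative of the left side is $-d^{-1}\sum_i w_i((A_i)_{pp}/d)^t\neq0$, so by the implicit function theorem $d_p(t)$ extends analytically through $t=0$. Hence $D_t$ and the normalized data $A_{(i)}=D_t^{-1}A_i$ are analytic in $t$ near $0$.

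\textbf{Off-diagonal entries.} We rerun the inductive step with $\phi_t$ in place of $p_t$. Lemma~\ref{lem:perturb} gives
\[
\phi_t(Y^{-1}A_{(i)})=\phi_t(Y_0^{-1}A_{(i)})-x\,a^{(i)}_{nn}\phi_t^{[1]}(a^{(i)}_{11},a^{(i)}_{nn})E_{1n}.
\]
So the corner equation is $\tilde r(t)-x\,\tilde c_{1n}(t)=0$, where $\tilde r=r/t$ and $\tilde c_{1n}=c_{1n}/t$ for $t\neq0$. We then check three things.
\begin{enumerate}
\item[(a)] By the induction hypothesis applied to the blocks $[1:n-1]$ and $[2:n]$, the matrix $Y_0(t)$ extends continuously to $t=0$, with $Y_0(0)$ built from the Karcher blocks. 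This uses the block consistency of $G$ from \cite{HuangLawson26} together with Lemma~\ref{lem:block-functional-calculus}.
\item[(b)] The quantity $\tilde r(t)$, given by the path formula (Theorem~\ref{thm:path}) applied to $\phi_t$, is continuous in $t$.
\item[(c)] We have $\tilde c_{1n}(t)=\sum_i w_i a^{(i)}_{nn}\phi_t^{[1]}(a^{(i)}_{11},a^{(i)}_{nn})>0$ for all $t$, including $t=0$, because $\phi_t$ is strictly increasing in $x$ and so each first divided difference is positive.
\end{enumerate}
It follows that $x(t)=\tilde r(t)/\tilde c_{1n}(t)$ is continuous at $0$. Its limit $x(0)$ solves the corner entry of $\sum_i w_i\log(Y^{-1}A_{(i)})=0$.

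\textbf{Conclusion.} At $t=0$ the limiting matrix $D_0Y(0)$ satisfies the Karcher equation \eqref{Karcher equation}, since each step reduces to the logarithmic recursion. By uniqueness of the Karcher mean \cite{HuangLawson26}, this matrix equals $G(\w;\mathbf A)$, which proves \eqref{eq:limit}. Dividing the numerator and the denominator $c_{pq}(t)$ of \eqref{eq:entry-recursion} by $t$ and letting $t\to0$ turns $p_t^{[\gamma]}/t$ into $\log^{[\gamma]}$. This gives the "moreover" statement: the recursion converges entrywise to the logarithmic divided-difference recursion for the Karcher mean.

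The main obstacle is making the $t\to0$ passage rigorous in the confluent case, where diagonal nodes coincide. This requires joint continuity of $(t,a_0,\ldots,a_k)\mapsto\phi_t^{[k]}(a_0,\ldots,a_k)$ at $t=0$ with repeated nodes, and the plan is to obtain it directly from Genocchi--Hermite with the jointly analytic integrand $\partial_x^k\phi_t$. A second point needing care is that the normalization $D_t$ itself moves with $t$. The induction must therefore be set up with the data $A_{(i)}(t)$ varying continuously in $t$, which the analyticity of $d_p(t)$ provides.
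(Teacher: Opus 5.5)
Your proposal is correct and follows essentially the same route as the paper. Both arguments divide the numerator and the denominator $c_{pq}(t)$ of the recursion by $t$, use Genocchi--Hermite to get $p_t^{[k]}/t\to\log^{[k]}$ (including confluent nodes), keep the limiting denominator nonzero via $\log^{[1]}(a,b)>0$, and induct through the recursion; your use of $\phi_t=(x^t-1)/t$, induction on $n$ via the corner entry, and identification of the limit through the limiting Karcher equation plus uniqueness is a slightly more explicit packaging of that argument, with the bonus that your implicit-function step makes the diagonal analytic in $t$ through $t=0$.
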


\begin{proof}
	For each positive scalar $a$,
	\[
	\frac{a^t-1}{t}\longrightarrow \log a.
	\]
	More generally, by the Genocchi--Hermite formula
	\eqref{Genocchi--Hermite}, the divided differences satisfy
	\begin{equation}\label{eq:dd-limit}
		\frac{p_t^{[k]}(a_1,\ldots,a_{k+1})}{t}
		\longrightarrow
		\log^{[k]}(a_1,\ldots,a_{k+1})
	\end{equation}
	locally uniformly on $(0,\infty)^{k+1}$.  The diagonal factors satisfy
	\[
	\left(\sum_i w_i a_i^t\right)^{1/t}
	\longrightarrow
	\prod_i a_i^{w_i}.
	\]
	
	The entrywise recursion \eqref{eq:entry-recursion} may be written as
	\begin{equation}
		x'_{pq}
		=
		\frac{1}{c_{pq}(t)/t}
		\sum_{i=1}^m w_i
		\sum_{\gamma\in\Gamma_{pq}}
		\frac{p_t^{[\gamma]}(A_{(i)})}{t}
		\Pi_\gamma\bigl((X'_{pq})^{-1}A_{(i)}\bigr).
	\end{equation}
	As $t\to0$, the matrices $A_{(i)}$ converge to their counterparts in
	the logarithmic recursion for $G(\w;A_1,\ldots,A_m)$.  Assuming
	inductively that the entries on the preceding superdiagonals have
	converged, the corresponding matrices $X'_{pq}$ converge as well.
	Hence, by \eqref{eq:dd-limit}, every term in the numerator converges to
	the corresponding term in the logarithmic recursion.
	Moreover, by \eqref{eq:c-t} and \eqref{eq:dd-limit},
	$c_{pq}(t)/t$ converges to the corresponding logarithmic coefficient,
	which is nonzero since
	\[
	\log^{[1]}(a,b)>0
	\qquad (a,b>0).
	\]
	It follows that $x'_{pq}$ converges to the $(p,q)$-entry determined by
	the logarithmic recursion.  Starting on the diagonal and proceeding
	superdiagonal by superdiagonal proves \eqref{eq:limit} by induction.
\end{proof}

\section{A Lie--Trotter Formula and Local Asymptotics for the Power Mean}
\label{sec:Lie-Trotter}

In this section we study the local behavior of the weighted power mean
$P_t$ near the identity.  We first determine its first-order
linearization and show that it is independent of the parameter $t\neq0$:
infinitesimally, every power mean reduces to the weighted arithmetic mean
on the Lie algebra $\t_n$.  This yields a Lie--Trotter formula and shows,
in particular, that all nonzero-order power means have the same
Lie--Trotter limit for matrix powers.

We then examine the second-order behavior.  In contrast to the universal
first-order term, the second-order term depends explicitly on $t$ and
therefore distinguishes different members of the family $\{P_t\}$.
This expansion also leads to a local comparison of the power, Karcher,
and log-Euclidean means near the identity, showing in particular that the
Karcher and log-Euclidean means agree to a higher order than the power
means.  In our context the log-Euclidean mean is given by
\[
L(\mathbf{w}, A_1,\ldots,A_m):=\exp\big(\sum_{i=1}^m w_i\log A_i\bigr).
\]

\begin{theorem}[First-order expansion and Lie--Trotter formula]
	\label{thm:Lie-Trotter}
	Let
	\[
	\w=(w_1,\ldots,w_m)\in\Delta_m^\circ,
	\qquad
	t\in\mathbb R\setminus\{0\},
	\]
	and let
	\[
	A_i:(-\varepsilon,\varepsilon)\longrightarrow\T_n,
	\qquad i=1,\ldots,m,
	\]
	be differentiable at $s=0$ with
	\[
	A_i(0)=I_n,
	\qquad
	A_i'(0)=:H_i\in\t_n. 
	\]
	Then
	\begin{equation}\label{eq:first-order-expansion}
		P_t\bigl(\w;A_1(s),\ldots,A_m(s)\bigr)
		=
		I_n
		+
		s\sum_{i=1}^m w_iH_i
		+
		o(s)
		\qquad (s\to0).
	\end{equation}
	Consequently,
	\begin{equation}\label{eq:Lie-Trotter-log}
		\lim_{s\to0}
		\frac{1}{s}
		\log
		P_t\bigl(\w;A_1(s),\ldots,A_m(s)\bigr)
		=
		\sum_{i=1}^m w_iH_i,
	\end{equation}
	and
	\begin{equation}\label{eq:Lie-Trotter}
		\lim_{s\to0}
		P_t\bigl(\w;A_1(s),\ldots,A_m(s)\bigr)^{1/s}
		=
		\exp\left(
		\sum_{i=1}^m w_iH_i
		\right).
	\end{equation}
\end{theorem}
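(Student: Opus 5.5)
The plan is to combine the real-analytic dependence from Theorem~\ref{thm:existence} with the chain rule. The main point is that the curves $A_i(s)$ are only assumed differentiable at $s=0$, which is exactly what a chain-rule argument needs. Write
\[
\Phi_t(A_1,\ldots,A_m):=P_t(\w;A_1,\ldots,A_m).
\]
For fixed $t\ne0$ and fixed $\w$, the map $\Phi_t$ is real-analytic on $(\T_n)^m$, and in particular differentiable at $(I_n,\ldots,I_n)$. By idempotency (Proposition~\ref{prop:basic-properties}(v)), $\Phi_t(I_n,\ldots,I_n)=I_n$. The chain rule then gives
\[
\Phi_t(A_1(s),\ldots,A_m(s))=I_n+s\,D\Phi_t(I,\ldots,I)[H_1,\ldots,H_m]+o(s).
\]
So everything reduces to computing the differential $D\Phi_t$ at the identity tuple.

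To compute the differential, I would implicitly differentiate the normalized equation \eqref{eq:normalized-power}. Take straight lines $A_i(s)=I+sH_i$, which stay in $\T_n$ for small $s$. Put $X(s)=\Phi_t(A_1(s),\ldots,A_m(s))$; this is differentiable with $X(0)=I$, and write $K:=X'(0)$. Then
\[
X(s)^{-1}A_i(s)=I+s(H_i-K)+o(s).
\]
The map $B\mapsto B^t$ is analytic near $I$ with derivative $H\mapsto tH$ there, since $(I+E)^t=I+tE+O(\|E\|^2)$ by the power series. Hence
\[
\sum_i w_i\bigl(X^{-1}A_i\bigr)^t=I+st\sum_i w_i(H_i-K)+o(s).
\]
This must equal $I$, and since $t\ne0$ we get $K=\sum_i w_iH_i$. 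The differential is linear, so its value on general $(H_1,\ldots,H_m)$ is determined by these straight-line curves. This proves \eqref{eq:first-order-expansion}.

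For \eqref{eq:Lie-Trotter-log}, apply $\log$, which is analytic near $I$ with $\log(I+E)=E+O(\|E\|^2)$. This gives
\[
\log X(s)=s\sum_i w_iH_i+o(s);
\]
dividing by $s$ and letting $s\to0$ gives the limit. For \eqref{eq:Lie-Trotter}, I would define $X(s)^{1/s}:=\exp\bigl(\tfrac1s\log X(s)\bigr)$, which is the analytic functional calculus definition on $\T_n$. Continuity of $\exp$ then yields $\exp\bigl(\sum_i w_iH_i\bigr)$.

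The main obstacle I expect is justifying the first step rigorously, namely differentiability of $\Phi_t$ at the identity tuple. This is supplied directly by the real-analyticity in Theorem~\ref{thm:existence}, applied with the weights held fixed. If one preferred to avoid that theorem, an alternative is induction on the entrywise recursion of Theorem~\ref{thm:entry-recursion}. That route would show that the first-order term of each $x'_{pq}$ is the weighted average of the $(H_i)_{pq}$, using $c_{pq}(t)\to t$ at the identity and the fact that products $\Pi_\gamma$ with $|\gamma|\ge2$ are $O(s^2)$. It is messier, however, so I would use the analyticity route.
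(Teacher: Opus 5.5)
Your proposal is correct and follows essentially the same route as the paper. Both arguments get differentiability of $s\mapsto P_t(\w;A_1(s),\ldots,A_m(s))$ at $0$ from the real-analyticity in Theorem~\ref{thm:existence} together with idempotency and the chain rule, linearize \eqref{eq:normalized-power} using $D(B\mapsto B^t)(I_n)[Z]=tZ$ to obtain $K=\sum_i w_iH_i$, and finish with $\log$ and $\exp$; the only difference is that your detour through the straight lines $I_n+sH_i$ is harmless but unnecessary, since the paper performs the same implicit differentiation directly on the given curves $A_i(s)$.
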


\begin{proof}
   Set
	\[
	X(s)
	=
	P_t\bigl(\w;A_1(s),\ldots,A_m(s)\bigr).
	\]
	Since
	\[
	A_1(0)=\cdots=A_m(0)=I_n,
	\]
	the idempotency of the power mean gives
	\[
	X(0)=I_n.
	\]
	For fixed $t\ne 0$, the power mean depends real-analytically on its matrix
	arguments.  Hence $X(s)$ is differentiable at $s=0$.  Put
	\[
	K:=X'(0).
	\]
	Then
	\[
	X(s)=I_n+sK+o(s).
	\]
	Likewise,
	\[
	A_i(s)=I_n+sH_i+o(s),
	\qquad i=1,\ldots,m.
	\]
	
	The  power mean satisfies the normalized equation
	\begin{equation}\label{eq:LT-normalized}
		\sum_{i=1}^m
		w_i\bigl(X(s)^{-1}A_i(s)\bigr)^t
		=
		I_n.
	\end{equation}
	Since
	\[
	X(s)^{-1}
	=
	I_n-sK+o(s),
	\]
	we obtain
	\[
	X(s)^{-1}A_i(s)
	=
	I_n+s(H_i-K)+o(s).
	\]
	
	The Fr\'echet derivative of the power map
	$
	\Phi_t(B)=B^t
	$
	at $I_n$ is
	\[
	D\Phi_t(I_n)[Z]=tZ.
	\]
	Therefore
	\[
	\bigl(X(s)^{-1}A_i(s)\bigr)^t
	=
	I_n+st(H_i-K)+o(s).
	\]
	Substitution into \eqref{eq:LT-normalized} gives
	\[
	I_n
	+
	st\sum_{i=1}^m
	w_i(H_i-K)
	+
	o(s)
	=
	I_n.
	\]
	Since $t\neq0$ and $\sum_iw_i=1$, we get
	\[
	K
	=
	\sum_{i=1}^m w_iH_i.
	\]
	Hence
	\[
	X(s)
	=
	I_n
	+
	s\sum_{i=1}^m w_iH_i
	+
	o(s),
	\]
	which proves \eqref{eq:first-order-expansion}.

	Since the Fr\'echet derivative of the logarithm at $I_n$ is the identity,
	\[
	\log(I_n+sK+o(s))
	=
	sK+o(s).
	\]
	Applying this to \eqref{eq:first-order-expansion} gives
	\[
	\frac{1}{s}
	\log
	P_t\bigl(\w;A_1(s),\ldots,A_m(s)\bigr)
	\longrightarrow K,
	\]
	which proves \eqref{eq:Lie-Trotter-log}.  Finally,
	\[
	P_t\bigl(\w;A_1(s),\ldots,A_m(s)\bigr)^{1/s}
	=
	\exp\left[
	\frac{1}{s}
	\log
	P_t\bigl(\w;A_1(s),\ldots,A_m(s)\bigr)
	\right],
	\]
	and continuity of the exponential map yields
	\[
	\lim_{s\to0}
	P_t\bigl(\w;A_1(s),\ldots,A_m(s)\bigr)^{1/s}
	=
	e^K
	=
	\exp\left(
	\sum_{i=1}^m w_iH_i
	\right).
	\qedhere 
	\]
\end{proof}

An important special case of
Theorem \ref{thm:Lie-Trotter} is the Lie--Trotter formula for matrix powers.

\begin{corollary} 
	\label{cor:Lie-Trotter-powers}
	Let
	\[
	A_1,\ldots,A_m\in\T_n,
	\qquad
	\w\in\Delta_m^\circ,
	\qquad
	t\in\mathbb R\setminus\{0\}.
	\]
	Then
	\begin{equation}\label{eq:Lie-Trotter-powers}
		\lim_{s\to0}
		P_t\bigl(
		\w;
		A_1^s,\ldots,A_m^s
		\bigr)^{1/s}
		=
		\exp\left(
		\sum_{i=1}^m w_i\log A_i
		\right).
	\end{equation}
\end{corollary}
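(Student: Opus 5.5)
We obtain Corollary~\ref{cor:Lie-Trotter-powers} as a direct specialization of Theorem~\ref{thm:Lie-Trotter}. For each $i$, define the curve
\[
A_i(s):=A_i^s=\exp\bigl(s\log A_i\bigr),\qquad s\in(-\varepsilon,\varepsilon),
\]
where $\log A_i\in\t_n$ is given by the global real-analytic diffeomorphism $\log:\T_n\to\t_n$. Since $\t_n$ is closed under addition and scalar multiplication and $\exp$ maps $\t_n$ into $\T_n$, each $A_i(s)$ lies in $\T_n$. The power $A_i^s$ defined by analytic functional calculus agrees with $\exp(s\log A_i)$, because both are given by the scalar function $x\mapsto x^s=e^{s\log x}$ on the positive spectrum.

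Next, we check the hypotheses of Theorem~\ref{thm:Lie-Trotter}. The map $s\mapsto\exp(s\log A_i)$ is real analytic in $s$, hence differentiable at $s=0$. It satisfies $A_i(0)=I_n$ and
\[
A_i'(0)=\log A_i=:H_i\in\t_n .
\]
Applying \eqref{eq:Lie-Trotter} with these curves gives
\[
\lim_{s\to0}P_t\bigl(\w;A_1^s,\ldots,A_m^s\bigr)^{1/s}
=\exp\Bigl(\sum_{i=1}^m w_iH_i\Bigr)
=\exp\Bigl(\sum_{i=1}^m w_i\log A_i\Bigr),
\]
which is \eqref{eq:Lie-Trotter-powers}.

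The argument has no real obstacle. The only point needing care is the identification of the functional-calculus power $A_i^s$ with the one-parameter subgroup $\exp(s\log A_i)$, which is what gives $A_i'(0)=\log A_i$. The power $P_t(\cdots)^{1/s}$ is well defined for $s\neq0$ because $P_t(\cdots)\in\T_n$ by Theorem~\ref{thm:existence}. The definition $Y^{1/s}=\exp\bigl(\tfrac1s\log Y\bigr)$ is already the one used at the end of the proof of Theorem~\ref{thm:Lie-Trotter}.
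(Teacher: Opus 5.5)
Your proposal is correct and is essentially the paper's proof: the paper also sets $A_i(s)=A_i^s=\exp(s\log A_i)$, notes $A_i(0)=I_n$ and $A_i'(0)=\log A_i$, and applies Theorem~\ref{thm:Lie-Trotter}. Your additional remarks are correct details that the paper leaves implicit: identifying the functional-calculus power $A_i^s$ with the one-parameter subgroup, and noting that $P_t(\cdots)^{1/s}$ is well defined.
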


\begin{proof}
	For each $i$, define
	\[
	A_i(s)=A_i^s=\exp(s\log A_i).
	\]
	Then
	$
	A_i(0)=I_n
	$
	and
	\[
	A_i'(0)=\log A_i.
	\]
	The conclusion follows immediately from
	Theorem~\ref{thm:Lie-Trotter}.
\end{proof}

The formula \eqref{eq:Lie-Trotter-powers} shows that
	every power
	mean $P_t$, $t\neq0$, has the same Lie--Trotter limit for matrix powers, 
	which is the $\w$-weighted log-Euclidean mean on $\T_n$.

The Lie--Trotter formula determines the first-order behavior of every power
mean at the identity.  We now compute the second-order term.  It turns out that this is the
first order at which the parameter $t$ enters.

Let 
$
H_1,\ldots,H_m\in\t_n,
$
and put
\begin{equation}\label{eq:M-V}
	M:=\sum_{i=1}^m w_iH_i,
	\qquad
	V:=\sum_{i=1}^m w_iH_i^2-M^2.
\end{equation}
Notice that
\begin{equation}\label{eq:variance-form}
	V
	=
	\sum_{i=1}^m w_i(H_i-M)^2.
\end{equation}
The diagonal entries of $V$
are the ordinary weighted variances
\[
V_{jj}
=
\sum_{i=1}^m w_i(H_i)_{jj}^2
-
\left(
\sum_{i=1}^m w_i(H_i)_{jj}
\right)^2
\geq0.
\]

\begin{theorem} 
	\label{thm:second-order-power}
	Let
	  $t\in \R \setminus\{0\}$.  Then, as $s\to0$,
	\begin{equation}\label{eq:power-second-order}
		P_t\bigl(\w;e^{sH_1},\ldots,e^{sH_m}\bigr)
		=
		I_n+sM+
		\frac{s^2}{2}\bigl(M^2+tV\bigr)
		+
		O(s^3),
	\end{equation}
	and consequently
	\begin{equation}\label{eq:log-power-second-order}
		\log
		P_t\bigl(\w;e^{sH_1},\ldots,e^{sH_m}\bigr)
		=
		sM+\frac{t s^2}{2}V+O(s^3).
	\end{equation}
\end{theorem}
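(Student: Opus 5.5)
We will expand the normalized power equation \eqref{eq:normalized-power} to second order in $s$. Since $s\mapsto e^{sH_i}$ is real-analytic and, by Theorem~\ref{thm:existence}, $P_t$ depends real-analytically on its matrix arguments, the curve
\[
X(s):=P_t\bigl(\w;e^{sH_1},\ldots,e^{sH_m}\bigr)
\]
is real-analytic near $s=0$. Idempotency gives $X(0)=I_n$, and Theorem~\ref{thm:Lie-Trotter} gives $X'(0)=M$. We may therefore write
\[
X(s)=I_n+sM+\tfrac{s^2}{2}N+O(s^3),
\]
with an unknown $N\in\t_n$. Only $N$ has to be identified.

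The key step is the second-order expansion of each summand $(X(s)^{-1}e^{sH_i})^t$. Rather than expanding the inverse and the product separately, we will pass to logarithms. Write $X(s)=\exp(Z(s))$ with
\[
Z(s)=sM+\tfrac{s^2}{2}(N-M^2)+O(s^3).
\]
By the Baker--Campbell--Hausdorff formula in the Lie algebra $\t_n$,
\[
\log\bigl(X^{-1}e^{sH_i}\bigr)=s(H_i-M)-\tfrac{s^2}{2}(N-M^2)-\tfrac{s^2}{2}[M,H_i]+O(s^3)=:C_i(s).
\]
Then $(X^{-1}e^{sH_i})^t=\exp(tC_i(s))$, and expanding the exponential gives
\[
(X^{-1}e^{sH_i})^t=I_n+tC_i(s)+\tfrac{t^2}{2}C_i(s)^2+O(s^3).
\]
Here $C_i(s)^2=s^2(H_i-M)^2+O(s^3)$.

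Next we sum with the weights $w_i$ and set the result equal to $I_n$. The first-order terms cancel because $\sum_i w_i(H_i-M)=0$. The commutator terms also cancel, since
\[
\sum_i w_i[M,H_i]=[M,M]=0.
\]
Using \eqref{eq:variance-form}, the coefficient of $s^2$ is
\[
-\tfrac{t}{2}(N-M^2)+\tfrac{t^2}{2}V.
\]
Since $t\neq0$, this forces $N=M^2+tV$, which is \eqref{eq:power-second-order}. For \eqref{eq:log-power-second-order}, we take the logarithm of this expansion, or equivalently read off $Z(s)$ directly:
\[
\log X=sM+\tfrac{s^2}{2}(N-M^2)+O(s^3)=sM+\tfrac{ts^2}{2}V+O(s^3).
\]

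The main obstacle is bookkeeping in the noncommutative expansion, specifically the cross term $-\tfrac{s^2}{2}[M,H_i]$ coming from BCH. One must check that it averages out, which it does because $M$ is exactly the weighted average of the $H_i$. One must also check that the $O(s^3)$ remainders are legitimate. This holds because every map involved ($\exp$, $\log$, inversion, products) is analytic on a neighborhood of $I_n$ in $\T_n$, and $X(s)$ is analytic by Theorem~\ref{thm:existence}. If one prefers to avoid BCH, the same result follows by expanding directly:
\[
X^{-1}=I-sM+\tfrac{s^2}{2}(2M^2-N)+O(s^3),
\]
\[
X^{-1}e^{sH_i}=I+sB_i+\tfrac{s^2}{2}\bigl(H_i^2-2MH_i+2M^2-N\bigr)+O(s^3),\qquad B_i=H_i-M.
\]
One then uses $(I+E)^t=I+tE+\tfrac{t(t-1)}{2}E^2+O(E^3)$ and verifies the same cancellation.
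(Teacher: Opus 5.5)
Your proof is correct. It follows the same overall plan as the paper: analyticity of $X(s)$ from Theorem~\ref{thm:existence}, the linear coefficient $M$ from Theorem~\ref{thm:Lie-Trotter}, and identification of the quadratic coefficient from the $s^2$ term of \eqref{eq:normalized-power}.

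The difference is the computational device. The paper stays in the group. It expands $X(s)^{-1}e^{sH_i}=I_n+sD_i+s^2E_i$ and applies the binomial expansion $(I_n+sD+s^2E)^t=I_n+stD+s^2\bigl(tE+\tfrac{t(t-1)}{2}D^2\bigr)+O(s^3)$. This is exactly your fallback route, and there the cross terms disappear because $\sum_iw_iMH_i=M^2$. Your main route instead passes to logarithmic coordinates and uses BCH, which the paper does only later, for the Karcher mean in Theorem~\ref{thm:local-comparison}.

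Your route has two advantages. First, \eqref{eq:log-power-second-order} is read off directly from $Z(s)$. Second, the normalized equation becomes $\sum_iw_iC_i(s)=-\tfrac t2\sum_iw_iC_i(s)^2+O(s^3)$. This is the Karcher equation perturbed by $t$ times the variance, so the comparison with $G$ comes out of the same computation. The paper's route is somewhat more elementary, since it needs only the binomial series for $(I_n+E)^t$ rather than BCH.
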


\begin{proof}
The map 
\[
s\mapsto 	P_t\bigl(\w;e^{sH_1},\ldots,e^{sH_m}\bigr)
\]
is real analytic. 
	Write
	\[
	X(s)
	=
	P_t\bigl(\w;e^{sH_1},\ldots,e^{sH_m}\bigr)
	=
	I_n+sM+s^2N+O(s^3).
	\]
	The first-order Lie--Trotter formula gives the coefficient $M$.
	
	We have
	\begin{align*}
		X(s)^{-1}
		&=
		I_n-sM+s^2(M^2-N)+O(s^3),
		\\
		e^{sH_i}
		&=
		I_n+sH_i+\frac{s^2}{2}H_i^2+O(s^3).
	\end{align*}
	Hence
	\[
	X(s)^{-1}e^{sH_i}
	=
	I_n+sD_i+s^2E_i+O(s^3),
	\]
	where
	\begin{align*}
		D_i
		&=H_i-M, \\
		E_i
		&=
		M^2-N-MH_i+\frac12H_i^2.
	\end{align*}
	For an arbitrary pair $D,E\in\t_n$,
	\[
	(I_n+sD+s^2E)^t
	=
	I_n+stD
	+
	s^2\left(
	tE+\frac{t(t-1)}2D^2
	\right)
	+
	O(s^3).
	\]
	Substituting into the normalized power equation gives, at order $s^2$,
	\[
	\sum_{i=1}^m w_iE_i
	+
	\frac{t-1}{2}
	\sum_{i=1}^m w_iD_i^2
	=0.
	\]
	Now
	\begin{align*}
		\sum_{i=1}^m w_iE_i
		&=
		-N+\frac12\sum_{i=1}^m w_iH_i^2, 
		\\
		\sum_{i=1}^m w_iD_i^2
		&=
		\sum_{i=1}^m w_iH_i^2-M^2
		=
		V.
	\end{align*}
	Thus
	\[
	N
	=
	\frac12M^2+\frac t2V,
	\]
	which proves \eqref{eq:power-second-order}.
	
	Finally,
	\[
	\log(I_n+sM+s^2N)
	=
	sM+s^2\left(N-\frac12M^2\right)+O(s^3),
	\]
	giving \eqref{eq:log-power-second-order}. 
\end{proof}

We use the second-order formula to compare the power mean with the
Karcher and log-Euclidean means.

\begin{theorem}
	\label{thm:local-comparison}
	With $M$ and $V$ as in \eqref{eq:M-V}, set
	\begin{align*}
		P_t(s)
		&=
		P_t(\w;e^{sH_1},\ldots,e^{sH_m}),
		\\ 
		G(s)
		&=
		G(\w;e^{sH_1},\ldots,e^{sH_m}),
		\\
		L(s)
		&=
		L(\w;e^{sH_1},\ldots,e^{sH_m}).
	\end{align*}
	Then
	\begin{align}
		\log P_t(s)
		&=
		sM+\frac{ts^2}{2}V+O(s^3),
		\label{eq:P-local}\\
		\log G(s)
		&=
		sM+O(s^3),
		\label{eq:G-local}\\
		\log L(s)
		&=
		sM.
		\label{eq:L-local}
	\end{align}
\end{theorem}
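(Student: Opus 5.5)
Equation \eqref{eq:P-local} is exactly \eqref{eq:log-power-second-order} of Theorem~\ref{thm:second-order-power}, so nothing further is needed there. Equation \eqref{eq:L-local} is immediate from the definition: $\log e^{sH_i}=sH_i$ because $\exp:\t_n\to\T_n$ is a diffeomorphism with inverse $\log$. Hence $L(s)=\exp\bigl(s\sum_i w_iH_i\bigr)=\exp(sM)$, and $\log L(s)=sM$ exactly. The real work is \eqref{eq:G-local}, the statement that the Karcher mean has no second-order term.

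For \eqref{eq:G-local} I would mimic the proof of Theorem~\ref{thm:second-order-power}, using the Karcher equation \eqref{Karcher equation} in place of the power equation. By the analyticity of $G$ established in \cite{HuangLawson26}, or alternatively by the limit Theorem~\ref{thm:limit} applied to the coefficients, the map $s\mapsto G(s)$ is smooth with $G(0)=I_n$. Write $\log G(s)=sK_1+s^2K_2+O(s^3)$, so that $G(s)^{-1}=\exp(-sK_1-s^2K_2+O(s^3))$. Applying the Baker--Campbell--Hausdorff expansion to second order gives
\[
\log\bigl(G(s)^{-1}e^{sH_i}\bigr)=s(H_i-K_1)+s^2\Bigl(-K_2-\tfrac12[K_1,H_i]\Bigr)+O(s^3).
\]
Next I multiply by $w_i$, sum over $i$, and set the result equal to zero. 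At order $s$ this gives $K_1=M$. At order $s^2$ it gives $-K_2-\frac12[M,\sum_i w_iH_i]=-K_2-\frac12[M,M]=0$, so $K_2=0$. This yields $\log G(s)=sM+O(s^3)$.

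The main obstacle is justifying the expansion of $G(s)$: one needs at least $C^3$ regularity, or analyticity, of $s\mapsto G(s)$, so that the remainder is really $O(s^3)$ and the coefficients can be matched. I would first cite the analytic dependence of the Karcher mean from \cite{HuangLawson26}. If that is not directly quoted, I would instead argue from the logarithmic analogue of the entrywise recursion (Theorem~\ref{thm:entry-recursion} with $p_t$ replaced by $\log$, whose coefficients $\log^{[1]}(a,b)>0$ are nonzero), which exhibits $G$ as an analytic function of the data. A second check is that BCH is legitimate here. Since $\exp$ is a global diffeomorphism on $\t_n$ and everything is near the identity for small $s$, the truncated BCH series with an $O(s^3)$ remainder is valid.
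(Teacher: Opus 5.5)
Your proposal is correct and follows the paper's proof essentially step for step: \eqref{eq:P-local} is quoted from Theorem~\ref{thm:second-order-power}, \eqref{eq:L-local} follows from $\log e^{sH_i}=sH_i$, and \eqref{eq:G-local} comes from a second-order Baker--Campbell--Hausdorff expansion of the Karcher equation, where the commutator term $\sum_i w_i[M,H_i]=[M,M]$ vanishes. Your version is slightly more careful than the paper's. You derive the first-order coefficient $K_1=M$ from the order-$s$ terms, and you address the regularity of $s\mapsto G(s)$ via the analyticity in \cite{HuangLawson26} or the logarithmic recursion; the paper takes both for granted. Note, however, that Theorem~\ref{thm:limit} alone would not supply that regularity.
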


\begin{proof}
	Equation \eqref{eq:P-local} is
	Theorem~\ref{thm:second-order-power}.  The log-Euclidean identity is
	immediate:
	\[
	L(s)
	=
	\exp\left(
	s\sum_iw_iH_i
	\right)
	=
	e^{sM}.
	\]
	
	For the Karcher mean, write
	\[
	G(s)=e^{Z(s)},
	\qquad
	Z(s)=sM+s^2Z_2+O(s^3).
	\]
	The Karcher equation becomes
	\[
	0
	=
	\sum_iw_i
	\log\bigl(e^{-Z(s)}e^{sH_i}\bigr).
	\]
	By the Baker--Campbell--Hausdorff formula,
	\[
	\log\bigl(e^{-Z(s)}e^{sH_i}\bigr)
	=
	s(H_i-M)
	-s^2Z_2
	-\frac{s^2}{2}[M,H_i]
	+O(s^3).
	\]
	After taking the weighted sum,
	\[
	\sum_iw_i[M,H_i]
	=
	[M,M]
	=
	0,
	\]
	and hence $Z_2=0$.  This proves \eqref{eq:G-local}.
\end{proof}

Theorem~\ref{thm:local-comparison} exhibits a qualitative distinction
	between the three means.  The Karcher and log-Euclidean means agree through
	second order near the identity, whereas the power mean differs from them at second order.  

The formula~\eqref{eq:G-local}   directly implies the
Lie--Trotter formula for the Karcher mean.

\begin{corollary} 
	\label{cor:Lie-Trotter-Karcher}
	Let
	$
	A_1,\ldots,A_m\in\T_n
	$
	and $\w\in\Delta_m^\circ.$
	Then
	\begin{equation}\label{eq:Lie-Trotter-Karcher}
		\lim_{s\to0}
		G\bigl(
		\w;
		A_1^s,\ldots,A_m^s
		\bigr)^{1/s}
		=
		\exp\left(
		\sum_{i=1}^m w_i\log A_i
		\right).
	\end{equation}
\end{corollary}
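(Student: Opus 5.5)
Set $H_i=\log A_i\in\t_n$, so that $A_i^s=e^{sH_i}$ and $G(\w;A_1^s,\ldots,A_m^s)=G(s)$ in the notation of Theorem~\ref{thm:local-comparison}. By \eqref{eq:G-local},
\[
\log G(s)=sM+O(s^3),\qquad M=\sum_{i=1}^m w_i\log A_i .
\]
The plan is to pass from this expansion of the logarithm to the $1/s$ power, exactly as at the end of the proof of Theorem~\ref{thm:Lie-Trotter}.

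For $s\neq0$ small, $G(s)\in\T_n$, and real powers are defined through the global logarithm. Hence
\[
G(s)^{1/s}=\exp\Bigl(\tfrac1s\log G(s)\Bigr)=\exp\bigl(M+O(s^2)\bigr).
\]
Continuity of $\exp$ on $\t_n$ then gives the limit $e^{M}$, which is \eqref{eq:Lie-Trotter-Karcher}.

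The only point needing care is that \eqref{eq:G-local} was derived by writing $Z(s)=sM+s^2Z_2+O(s^3)$. This presupposes that $Z(s)=\log G(s)$ is at least $C^2$ (indeed analytic) near $s=0$ with $Z(0)=0$.

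\begin{itemize}
\item I would justify $Z(0)=0$ by idempotency: $G(\w;I_n,\ldots,I_n)=I_n$ solves the Karcher equation, and the solution is unique.
\item I would justify smoothness by the real-analytic dependence of the Karcher mean on its data from \cite{HuangLawson26}. Alternatively, one can pass to the limit $t\to0$ in the recursion of Theorem~\ref{thm:limit}: the logarithmic recursion uses only analytic operations and divisions by $\log^{[1]}(a,b)>0$, so $s\mapsto G(s)$ is analytic.
\end{itemize}

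If one only wants the corollary, a weaker first-order statement $\log G(s)=sM+o(s)$ suffices. It follows from the $s$-linear part of the Baker--Campbell--Hausdorff computation already carried out in the proof of Theorem~\ref{thm:local-comparison}.
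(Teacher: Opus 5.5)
Your proposal is correct and takes the same route as the paper. The paper's proof reads the corollary off \eqref{eq:G-local}: $G(s)^{1/s}=\exp\bigl(\tfrac1s\log G(s)\bigr)=\exp\bigl(M+O(s^2)\bigr)\to e^{M}$ by continuity of $\exp$, exactly as at the end of the proof of Theorem~\ref{thm:Lie-Trotter}. Your extra justification that $s\mapsto\log G(s)$ is smooth with value $0$ at $s=0$ (via idempotency and the analyticity of the Karcher mean from \cite{HuangLawson26}) supplies a point the proof of Theorem~\ref{thm:local-comparison} takes for granted.
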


\section{Concluding Remarks}

The weighted power mean on $\T_n$ provides a nonlinear family of means
whose structure is governed by the analytic functional calculus of upper
triangular matrices.  The central mechanism is the divided-difference
representation of triangular matrix functions: once the lower
superdiagonals of a prospective solution have been determined, the next
unknown entry enters the normalized power equation linearly, with a
coefficient involving the first divided difference of $p_t(x)=x^t$.
Since this coefficient is nonzero for every $t\ne0$, the argument yields
existence, uniqueness, real-analytic dependence, and finite recursive
algorithms throughout the full range
$t\in\mathbb R\setminus\{0\}$.

The parameter $t$ connects several different aspects of the theory.
The limit $t\to0$ transforms the power equation into the Karcher equation
and gives
\[
P_t(\w;\mathbf A)\longrightarrow G(\w;\mathbf A).
\]
At the infinitesimal level, however, all nonzero-order power means have
the same first-order linearization at the identity, while the parameter
$t$ first appears in the second-order term.  This leads both to the
Lie--Trotter formula
\[
\lim_{s\to0}
P_t(\w;A_1^s,\ldots,A_m^s)^{1/s}
=
\exp\left(\sum_{i=1}^m w_i\log A_i\right)
\]
and to a precise local comparison with the Karcher and log-Euclidean
means.

Several directions remain open.  Although the diagonal entries inherit
the classical monotonicity of scalar power means, it would be interesting
to identify stronger comparison principles for $P_t$ that capture the
strictly upper triangular part.  
 Other natural problems include  numerical   stability of the recursive algorithms, and 
 extensions of the divided-difference to   general solvable Lie groups.


\begin{thebibliography}{99}

\bibitem{dB05}
C. de Boor, \emph{Divided Differences}, Surv. Approx. Theory \textbf{1} (2005), 46--69.

\bibitem{Hu26}
H. Huang, \emph{The Karcher mean on $2\times2$ triangular matrices with positive diagonal entries}, arXiv:2609.37018. 

\bibitem{HuangLawson26}
H. Huang and J. Lawson,
\emph{The Karcher Mean on Lie Groups of Upper Triangular Matrices},  arXiv:2609.34267.  

\bibitem{Law24}
J. Lawson, \emph{Weighted Karcher means on unipotent Lie groups}, Adv. Oper. Theory (2024).

\bibitem{Law25}
J. Lawson, \emph{The weighted exponential (Karcher) mean on Lie groups}, Acta Sci. Math. (Szeged), published online 2025.

\bibitem{LawsonLim2014}
J. Lawson and Y. Lim,
\emph{Karcher means and Karcher equations of positive definite operators},
Trans. Amer. Math. Soc. Ser. B \textbf{1} (2014), 1--22.


\bibitem{LimPalfia2012}
Y.~Lim and M.~P\'alfia,

\emph{Matrix power means and the Karcher mean},
J. Funct. Anal. \textbf{262} (2012), 1498--1514.

\bibitem{Par76}
B. Parlett, \emph{A recurrence among the elements of functions of triangular matrices}, Linear Algebra Appl. \textbf{14} (1976), 117--121.

\bibitem{Pennec}
X.~ Pennec, \emph{Manifold-valued image processing with SPD matrices}. in: \emph{Riemannian Geometric Statistics in
Medical Image Analysis}, Chap. 3, Elsevier, 2020, 75--234.

\bibitem{PennecArsigny}
X.~Pennec and V.~Arsigny,
\emph{Exponential barycenters of the canonical Cartan connection and
invariant means on Lie groups},
in: \emph{Matrix Information Geometry},
Springer, 2012, 123--166.


\end{thebibliography}
\end{document}